%

\RequirePackage{fix-cm}
\documentclass[smallcondensed]{svjour3}
\smartqed

\date{\ }

\usepackage[utf8]{inputenc}
\usepackage{graphicx}
\usepackage{amsmath, amssymb, amsbsy}
\usepackage[colorlinks=true,
allcolors=blue,
linktocpage=true,
breaklinks=true,
bookmarksopen=true,
bookmarksopenlevel=1,
bookmarksnumbered=true,
pdftitle={Notes on {a,b,c}-Modular Matrices},
pdfauthor={Christoph Glanzer, Ingo Stallknecht and Robert Weismantel},
pdfkeywords={Integer optimization, Recognition algorithm, Bounded subdeterminants, Total unimodularity, Bounded minors},
pdflang={en-US}
]{hyperref}
\usepackage{enumerate} 
\usepackage[usenames,dvipsnames]{xcolor}
\usepackage{xspace}
\usepackage{xfrac} 


\newcommand{\Z}{\mathbb{Z}}
\newcommand{\R}{\mathbb{R}}

\newcommand{\inv}{^{-1}}
\newcommand{\tu}{\texttt{TU}\xspace}
\def\T{^\mathrm{T}}
\newcommand{\rank}{\mathrm{rank}}
\newcommand{\ip}{\texttt{IP}\xspace}
\newcommand{\ips}{\texttt{IP}s\xspace}
\newcommand{\lps}{\texttt{LP}s\xspace}
\newcommand{\bip}{\texttt{BIP}\xspace}
\newcommand{\bips}{\texttt{BIP}s\xspace}
\newcommand{\lp}{\texttt{LP}\xspace}
\newcommand{\hnf}{\texttt{HNF}\xspace}

\newcommand{\bfi}[1]{\mathbf{I}_{#1}}
\newcommand{\bfj}[1]{\mathbf{J}_{#1}}

\newcommand{\ineq}[1]{\mathtt{IP}_{\mathrm{\leq}}({#1})\xspace}
\newcommand{\ineqr}[1]{\mathtt{LP}_{\mathrm{\leq}}({#1})\xspace}
\newcommand{\ineqcone}[1]{\mathtt{IP}^{\mathrm{cone}}_{\mathrm{\leq}}({#1})\xspace}

\spnewtheorem{claimenv}{Claim}{}{\itshape}


\setcounter{MaxMatrixCols}{20}

\makeatletter
\def\@fnsymbol#1{\ifcase#1\or\star\or \dagger\or \ddagger\or
	\mathchar "278\or \mathchar "27B\or \|\or **\or \dagger\dagger
	\or \ddagger\ddagger \else\@ctrerr\fi\relax}
\makeatother


\begin{document}	
	\titlerunning{Notes on $\{a,b,c\}$-Modular Matrices}
	\title{Notes on $\{a,b,c\}$-Modular Matrices\footnote{This is the extended version of a paper published in the Proceedings of the 22nd Conference on Integer Programming and Combinatorial Optimization~\cite{abc_modular_ipco}. It includes all deferred proofs and several examples of $\{a,b,c\}$-modular matrices.}\footnote{This version of the article has been accepted for publication after peer review but is not the Version of Record and does not reflect post-acceptance improvements, or any corrections. The Version of Record is available online at: \url{https://doi.org/10.1007/s10013-021-00520-9}.}}
	\author{Christoph Glanzer\textsuperscript{$\mathsection\ddagger$}\and
		Ingo Stallknecht\textsuperscript{$\mathsection$}\and
		Robert Weismantel\textsuperscript{$\mathsection$}
	}
	\authorrunning{C. Glanzer et al.}
	\institute{	\textsuperscript{$\mathsection$} Department of Mathematics, ETH Zürich, Switzerland.\\\email{\{firstname.lastname\}@ifor.math.ethz.ch}\\~\\\textsuperscript{$\ddagger$} Corresponding author.}
	\maketitle
	
	\begin{abstract}
		Let $A \in \Z^{m \times n}$ be an integral matrix and $a$, $b$, $c \in \Z$ satisfy $a \geq b \geq c \geq 0$. The question is to recognize whether $A$ is \emph{$\{a,b,c\}$-modular}, i.e., whether the set of $n \times n$ subdeterminants of $A$ in absolute value is $\{a,b,c\}$. We will succeed in solving this problem in polynomial time unless $A$ possesses a \emph{duplicative relation}, that is, $A$ has nonzero $n \times n$ subdeterminants $k_1$ and $k_2$ satisfying $2 \cdot |k_1| = |k_2|$. This is an extension of the well-known recognition algorithm for totally unimodular matrices. As a consequence of our analysis, we present a polynomial time algorithm to solve integer programs in standard form over $\{a,b,c\}$-modular constraint matrices for any constants $a$, $b$, and $c$.
		
		\keywords{Integer optimization \and Recognition algorithm \and Bounded subdeterminants \and Total unimodularity.}
		\acknowledgement{This work was partially supported by the Einstein Foundation Berlin. We are grateful to Miriam Schlöter for proofreading the manuscript, and to the anonymous reviewers for several helpful comments.}
	\end{abstract}
	
	\newpage
	
	\section{Introduction}

A matrix is called \emph{totally unimodular} (\tu) if all of its subdeterminants are equal to $0$, $1$, or $-1$. Within the past 60 years the community has established a deep and beautiful theory about \tu matrices. A landmark result in the understanding of such matrices is Seymour's decomposition theorem~\cite{Seymour_Decomposition}. It shows that \tu matrices arise from network matrices and two special matrices using row, column, transposition, pivoting, and so-called $k$-sum operations. As a consequence of this theorem it is possible to recognize in polynomial time whether a given matrix is \tu~\cite{Schrijver_IP,truemper1990decomposition}. An implementation of the algorithm in \cite{truemper1990decomposition} by Walter and Truemper~\cite{walter2013implementation} returns a certificate if $A$ is not \tu: For an input matrix with entries in $\{0, \pm 1\}$, the algorithm finds a submatrix $\tilde A$ which is minimal in the sense that $|\det(\tilde A)| = 2$ and every proper submatrix of $\tilde A$ is \tu. We refer to Schrijver~\cite{Schrijver_IP} for a textbook exposition of Seymour's decomposition theorem, a recognition algorithm arising therefrom and further material on \tu matrices.\\

There is a well-established relationship between totally unimodular and \emph{unimodular} matrices, i.e., matrices whose $n \times n$ subdeterminants are equal to $0$, $1$ or $-1$. In analogy to this we define for $A \in \Z^{m \times n}$ and $m \geq n$,
\[ D(A) := \{ |\det(A_{I,\cdot})| \, \colon I \subseteq [m],\, |I| = n\},\]
the set of all $n \times n$ subdeterminants of $A$ in absolute value, where $A_{I,\cdot}$ is the submatrix formed by selecting all rows with indices in $I$. It follows straightforwardly from the recognition algorithm for \tu matrices that one can efficiently decide whether $D(A)\subseteq \{ 1,0 \}$. A technique in~\cite[Section~3]{artmann2017strongly} allows us to recognize in polynomial time whether $D(A) \subseteq \{ 2, 0 \}$. If all $n \times n$ subdeterminants of $A$ are nonzero, the results in~\cite{note2016} can be applied to calculate $D(A)$ given that $\max\{ k\colon k\in D(A) \}$ is constant (cf. Lemma~\ref{lemma:efficient_non_degeneracy_recognition}). Nonetheless, with the exception of these results we are not aware of other instances for which it is known how to determine $D(A)$ in polynomial time.\\

The main motivation for the study of matrices with bounded subdeterminants comes from integer optimization problems (\ips). It is a well-known fact that \ips of the form $\max \{c \T x \colon Ax \leq b,\ x \in \Z^n\}$ for $A \in \Z^{m \times n}$ of full column rank, $b \in \Z^m$ and $c \in \Z^n$ can be solved efficiently if $D(A) \subseteq \{1,0\}$, i.e., if $A$ is unimodular. This naturally leads to the question whether these problems remain efficiently solvable when the assumptions on $D(A)$ are further relaxed. Quite recently in~\cite{artmann2017strongly} it was shown that when $D(A) \subseteq \{2,1,0\}$ and $\rank(A) = n$, integer optimization problems can be solved in strongly polynomial time. Recent results have also led to understand \ips when $A$ is \emph{nondegenerate}, i.e., if $0\not\in D(A)$. The foundation to study the nondegenerate case was laid by Veselov and Chirkov~\cite{veselov2009Intprobimmat}. They describe a polynomial time algorithm to solve \ips if $D(A) \subseteq \{2,1\}$. In~\cite{note2016} the authors showed that \ips over nondegenerate constraint matrices are solvable in polynomial time if the largest $n \times n$ subdeterminant of the constraint matrix is bounded by a constant.

The role of bounded subdeterminants in complexity questions and in the structure of \ips and \lps has also been studied in~\cite{eisenbrand_2017_geometric,glanzer2018number,gribanov2016integer,IntNumb2020}, as well as in the context of combinatorial problems in~\cite{StableSet2020,conforti2020extended,nagele2019submodular}. The sizes of subdeterminants also play an important role when it comes to the investigation of the \emph{diameter of polyhedra}, see~\cite{Dyeretal} and~\cite{Bonifasetal}.

\subsection{Our Results}

A matrix $A \in \Z^{m \times n}$, $m \geq n$, is called \emph{$\{a,b,c\}$-modular} if $D(A) = \{a,b,c\}$, where $a \geq b \geq c \geq 0$.\footnote{For reasons of readability, we will stick to the notation that $a \geq b \geq c$ although the order of these elements is irrelevant.} The paper presents three main results. First, we prove the following structural result for a subclass of $\{a,b,0\}$-modular matrices.
\begin{theorem}[Decomposition Property] \label{thm:format_one_sum}
	Let $a \geq b > 0$, $\gcd(\{a,b\}) = 1$ and assume that $(a,b) \neq (2,1)$. Using row permutations, multiplications of rows by $-1$, and elementary column operations, any $\{a,b,0\}$-modular matrix $A \in \Z^{m \times n}$ can be brought into a block structure of the form
	\begin{align} \label{eq:full_one_sum_layout}
		\begin{bmatrix} L & 0 & \sfrac{0}{a} \\ 0 & R & \sfrac{0}{b} \end{bmatrix},
	\end{align}
	in time polynomial in $n$, $m$, and $\log ||A||_{\infty}$, where $L\in\mathbb{Z}^{m_1 \times n_1}$ and $R\in\mathbb{Z}^{m_2 \times n_2}$ are \tu, $n_1 + n_2 = n-1$, and $m_1 + m_2 = m$. In the representation above the rightmost column has entries in $\{ 0,a \}$ and $\{ 0,b \}$, respectively. The matrix $\begin{bmatrix} L & 0 \\ 0 & R \end{bmatrix}$ contains the $(n-1)$-dimensional unit matrix as a submatrix.
\end{theorem}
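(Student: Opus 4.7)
\emph{Reduction to a canonical form.} The plan is to bring $A$ into a normal form by a row permutation and elementary column operations, and then extract the block decomposition from constraints on the $n\times n$ subdeterminants. Since $b\in D(A)$, I select an $n\times n$ submatrix $S$ of $A$ with $|\det(S)|=b$ and permute its rows to the top. Elementary column operations bring $S$ into Hermite normal form, and since $\gcd(a,b)=1$ forces its leading $n-1$ invariant factors to equal $1$, further reductions via the unit diagonals convert the top into $\mathrm{diag}(1,\ldots,1,b)$. Applied to all of $A$, this produces
\[
A'\;=\;\begin{pmatrix} I_{n-1} & 0 \\ 0 & b \\ P & q \end{pmatrix},
\]
with $P\in\Z^{(m-n)\times(n-1)}$ and $q\in\Z^{m-n}$.

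\emph{Entry bounds and TU-ness of the first $n-1$ columns.} I evaluate $n\times n$ subdeterminants of $A'$ using one or two bottom rows. Over rows $\{1,\ldots,n-1,n+i\}$ the determinant equals $q_i$, giving $q_i\in\{0,\pm a,\pm b\}$. Over rows $\{1,\ldots,n\}\setminus\{k\}\cup\{n+i\}$ it equals $\pm b\,P_{i,k}$, so for $b>1$ one obtains $P_{i,k}\in\{0,\pm 1\}$ (the edge case $b=1$ is treated separately, after redoing the reduction with a submatrix of determinant $a$). Any $(n-1)\times(n-1)$ submatrix $T$ of the first $n-1$ columns of $A'$, extended by the row $(0,\ldots,0,b)$ and the last column, has determinant $\pm b\det(T)\in\{0,\pm a,\pm b\}$, whence $\det(T)\in\{0,\pm 1\}$ and the first $n-1$ columns of $A'$ form a TU matrix (after removal of the zero row). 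Finally, for two distinct bottom rows $i,j$ and a column $k$, cofactor expansion along the $n-2$ top identity rows of $\{1,\ldots,n-1\}\setminus\{k\}\cup\{n+i,n+j\}$ shows that the corresponding subdeterminant equals $\pm(P_{i,k}q_j-P_{j,k}q_i)$.

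\emph{Incompatibility and block decomposition.} The decisive observation is that if $|q_i|=a$, $|q_j|=b$, and $P_{i,k},P_{j,k}\in\{\pm 1\}$, then $\pm a\pm b\in\{0,\pm a,\pm b\}$; a short case analysis on the four sign choices forces $a=2b$ or $a=b$, which under $\gcd(a,b)=1$ reduces either to the excluded pair $(a,b)=(2,1)$ or to the trivial subcase $a=b=1$. Therefore the column supports in $P$ of the ``$a$-rows'' $\{i:|q_i|=a\}$ and the ``$b$-rows'' $\{i:|q_i|=b\}$ are disjoint. Permuting columns so as to separate these two supports, and permuting rows to group the two classes—with the ``$0$-rows'' (those with $q_i=0$, including the top unit rows $e_1,\ldots,e_{n-1}$ and the row $(0,\ldots,0,b)$) assigned to whichever block contains their support—yields the claimed block-diagonal form. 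Both $L$ and $R$ inherit TU-ness as submatrices of the TU matrix from the previous step, the retained top unit rows supply the $(n-1)$-dimensional identity substructure of $\left(\begin{smallmatrix}L&0\\0&R\end{smallmatrix}\right)$, and row sign flips normalize the last-column entries from $\pm a,\pm b$ to $a,b$. All steps—HNF computation, subdeterminant evaluations, and the final partitioning—run in time polynomial in $n,m,\log\|A\|_\infty$. The main obstacle is precisely the incompatibility case analysis: both coprimality and the exclusion of $(a,b)=(2,1)$ are needed to rule out every identification of $\pm a\pm b$ with an element of $\{0,\pm a,\pm b\}$, and the edge case $b=1$ requires a symmetric but independent reduction using an $a$-determinant submatrix to recover analogous entry bounds on $P$ and $q$.
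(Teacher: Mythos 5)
Your local analysis is essentially sound and in places even cleaner than the paper's: once the top of the matrix is $\mathrm{diag}(1,\dots,1,b)$ with the explicit row $(0,\dots,0,b)$ available, the minors $\pm b\, P_{i,k}$ and $\pm b\det(T)$ do force the entries of $P$ into $\{0,\pm 1\}$ and give total unimodularity of the first $n-1$ columns without the Diophantine analysis and column operations the paper performs in its Claim~\ref{claim:claim1}. Two small repairs are still needed there: the assertion that $\gcd(a,b)=1$ ``forces the leading $n-1$ invariant factors of $S$ to equal $1$'' is not a property of $S$ alone (e.g.\ $\mathrm{diag}(2,3)$ has determinant $6$ and trivial first determinantal divisor, yet its \hnf is not $\mathrm{diag}(1,6)$); you must argue through the whole matrix $A$, for instance that the $n$-th column cannot be divisible by an integer larger than one, so that two $n\times n$ minors of distinct nonzero absolute value would share the factor $\prod_i\delta_i$, contradicting coprimality. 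Also, bounding the $(n-1)\times(n-1)$ minors alone does not yield \tu{}ness; you need the (easy, but necessary) remark that smaller minors extend to maximal ones through the unit rows.

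The genuine gap is in the final step. What you prove is that no single column carries nonzero entries of both an $a$-row and a $b$-row; what the block structure~\eqref{eq:full_one_sum_layout} requires is that the $a$-rows and the $b$-rows lie in \emph{different connected components} of the row--column incidence structure on $P$, where connections may pass through the $0$-rows. A row with $q_i=0$ may have nonzero entries both in a column touched by an $a$-row and in a column touched by a $b$-row without violating your $2\times 2$ condition, and then no column permutation separates the supports as you claim; more generally, a chain of $0$-rows can link the two sides. Ruling this out is the heart of the paper's Claim~\ref{claim:claim2}: a shortest path between the $a$-part and the $b$-part of a component in the auxiliary graph produces a staircase submatrix with $\pm 1$ on two adjacent diagonals and last column $(a,0,\dots,0,b)^{\mathrm{T}}$, whose determinant is $\pm a\pm b\notin\{0,\pm a,\pm b\}$. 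Your four-sign case analysis is exactly the length-two instance of this argument; the arbitrary-length case is what actually carries the theorem and is not a routine extension of the $2\times 2$ check.
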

The first $n-1$ columns of~\eqref{eq:full_one_sum_layout} are \tu since they form a $1$-sum of two \tu matrices (see \cite[Chapter~19.4]{Schrijver_IP}). This structural property lies at the core of the following recognition algorithm. We say that a matrix $A$ possesses a \emph{duplicative relation} if it has nonzero $n \times n$ subdeterminants $k_1$ and $k_2$ satisfying $2 \cdot |k_1| = |k_2|$.
\begin{theorem}[Recognition Algorithm] \label{thm:abc_recognition}
	There exists an algorithm that solves the following recognition problem in time polynomial in $n$, $m$, and $\log ||A||_{\infty}$: Either, calculate $D(A)$, or give a certificate that $|D(A)| \geq 4$, or return a duplicative relation.
\end{theorem}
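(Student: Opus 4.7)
My plan is to structure the algorithm in three phases: \emph{exploration} of subdeterminant values via local pivoting, \emph{classification} into a handful of standard cases using Theorem~\ref{thm:format_one_sum}, and \emph{verification} of the resulting hypothesis.

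For exploration, I would first compute $\rank(A)$; if it is below $n$, output $D(A) = \{0\}$. Otherwise, find a nonsingular submatrix $B = A_{I,\cdot}$, recording $|\det(B)| \in D(A)$. Every basis obtained by swapping a row of $I$ for a row outside $I$ has a determinant that can be read off directly from the tableau $A B^{-1}$, so the absolute values of all bases adjacent to $B$ are computed in polynomial time. I then iteratively pivot into any basis that reveals a new subdeterminant value, maintaining a set $S \subseteq D(A)$. If at any point $|S| \ge 4$, halt and return the four witnessing row-sets as a certificate of $|D(A)| \ge 4$; if two nonzero elements $k_1, k_2 \in S$ satisfy $2|k_1| = |k_2|$, halt and return the duplicative relation. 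Otherwise the procedure terminates once no adjacent basis exposes a new value.

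For classification, at termination $|S| \le 3$, say $S = \{a,b,c\}$ with $a \ge b \ge c \ge 0$ and no duplicative relation between its nonzero elements. If $c > 0$, then $A$ is nondegenerate and Lemma~\ref{lemma:efficient_non_degeneracy_recognition} computes $D(A)$ exactly. If $b = 0$, the problem reduces after scaling by $1/a$ to \tu-recognition via Seymour's algorithm. The substantive case is $S = \{a,b,0\}$ with $b > 0$; dividing out $g = \gcd(a,b)$, the assumption $(a/g, b/g) \ne (2,1)$ follows from the absence of a duplicative relation, so Theorem~\ref{thm:format_one_sum} applies to $A/g$ and, in polynomial time, either produces the block decomposition~\eqref{eq:full_one_sum_layout} or an obstruction to it. Given the decomposition, every $n \times n$ subdeterminant of $A$ is the product of a $\pm 1$ or $0$ subdeterminant of the \tu block $L$ or $R$ with $a$ or $b$, letting us verify $D(A) = \{a,b,0\}$ directly. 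If the decomposition attempt fails, a case analysis of the obstruction yields either a new attained subdeterminant value (forcing $|S| \ge 4$) or a duplicative relation.

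The main obstacle will be the justification of the exploration phase: a priori, a subdeterminant could require many simultaneous row swaps to reach from the starting basis, so local pivoting might miss values. The essential argument will be a matroid-exchange statement exploiting that, in the absence of a duplicative relation, the nonzero elements of $D(A)$ differ by a bounded coprime ratio; in effect Theorem~\ref{thm:format_one_sum} is the structural reason single-row pivoting suffices. Once this exhaustiveness is established and the case split between decomposition and obstruction is made algorithmically effective, the remaining bookkeeping — reading off $D(A)$ from the block structure and converting obstructions into certifying witnesses — is routine but must be executed carefully to preserve the polynomial running time.
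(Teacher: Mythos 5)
Your high-level architecture (find candidate values, check for a duplicative relation, then verify the hypothesis $D(A)=\{a,b,0\}$ via the decomposition of Theorem~\ref{thm:format_one_sum}) matches the paper's, but two load-bearing steps are asserted rather than proved, and one of them I do not believe can be proved in the form you state it. The exploration phase is the critical gap: your algorithm only pivots into an adjacent basis when that basis \emph{reveals a new value}, and terminates when no neighbour of any visited basis does. Although the basis-exchange graph of a matroid is connected under single swaps, your search refuses to pass through bases whose determinant value has already been seen, so it can stall in a region of the exchange graph while a basis realizing a third (or fourth) value sits several swaps away behind a wall of already-seen values. The ``matroid-exchange statement'' you invoke to rule this out is exactly the missing idea, and you give no argument for it; the paper avoids the problem entirely by never doing local search. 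It finds one nonzero value $k_1$ by Gaussian elimination, then finds a second value $k_2$ by a specific deterministic construction (reduce to $\gcd(D(A))=1$ via the Smith normal form, put $A$ in the adapted \hnf~\eqref{eq:HNF}, and either read off two distinct nonzero entries in the last column or run a \tu test whose failure certificate yields a subdeterminant different from $1$), and then lets the \emph{verification} algorithm certify that no further values exist. If you intend your verification step to play that role instead — so that exhaustive exploration is unnecessary — you must prove that a failed verification always returns, in polynomial time, either a new element of $D(A)$ or a duplicative relation; that statement is the paper's Lemma~\ref{lemma:efficient_0ab_modularity_recognition} and is the technical heart of the theorem, not ``routine bookkeeping.'' In particular, the converse direction (block form~\eqref{eq:full_one_sum_layout} with \tu blocks implies $D(A)\subseteq\{a,b,0\}$) is not the content of Theorem~\ref{thm:format_one_sum}, which only gives the forward implication; it requires the $1$-sum determinant analysis together with two additional \tu tests on the $a$-part and the $b$-part after dividing out the last column.

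A second, smaller but genuine, defect: ``Theorem~\ref{thm:format_one_sum} applies to $A/g$'' is not meaningful, since $A$ need not be divisible entrywise by $g=\gcd(a,b)$. The reduction to the coprime case must go through the Smith normal form as in Lemma~\ref{lemma:wlog_gcd_1}, replacing $A$ by the integral matrix $AQS\inv$; this also produces the $\gcd$-mismatch certificate, which you need because a failed verification can report $\gcd(D(A))\neq\gcd(\{a,b\})$ without exhibiting a concrete fourth subdeterminant, and one must argue separately (using $\{k_1,k_2,0\}\subseteq D(A)$) that this already implies $|D(A)|\geq 4$. Finally, note that your pivoting never produces singular submatrices, so membership of $0$ in $D(A)$ must be decided by a separate mechanism (the paper uses Lemma~\ref{lemma:efficient_non_degeneracy_recognition} with $d=3$ up front); your case split on ``$c>0$'' versus ``$c=0$'' presupposes this has been done.
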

For instance, Theorem~\ref{thm:abc_recognition} cannot be applied to check whether a matrix is $\{4,2,0\}$-modular, but it can be applied to check whether a matrix is $\{3,1,0\}$- or $\{6,4,0\}$-modular. More specifically, Theorem~\ref{thm:abc_recognition} recognizes $\{a,b,c\}$-modular matrices unless $(a,b,c) = (2 \cdot k,k,0)$, $k \in \Z_{\geq 1}$. In particular, this paper does not give a contribution as to whether so-called \emph{bimodular matrices} (the case $k=1$) can be recognized efficiently. This is because Theorem~\ref{thm:format_one_sum} excludes the case $(a,b) = (2,1)$.

The decomposition property established in Theorem~\ref{thm:format_one_sum} is a major ingredient for the following optimization algorithm to solve standard form \ips over $\{a,b,c\}$-modular constraint matrices for any constant $a \geq b \geq c \geq 0$.
\begin{theorem}[Optimization Algorithm]\label{thm:standard_optimize}
	Consider a standard form integer program of the form
	\begin{align}
		\label{eq:standard_form_ip_statement} \max \{c\T x \colon Bx = b,\ x \in \Z^n_{\geq 0} \},
	\end{align}
	for $b \in \Z^m$, $c \in \Z^n$, and $B \in \Z^{m \times n}$ of full row rank, where $D(B\T)$ is constant, i.e., $\max\{ k\colon k\in D(B\T) \}$ is constant.\footnote{Note that we use $D(B\T)$ instead of $D(B)$ since $B$ has full row rank and we refer to its $m \times m$ subdeterminants.} Then, in time polynomial in $n$, $m$, and the encoding size of the input data, one can solve~\eqref{eq:standard_form_ip_statement} or output that $|D(B\T)| \geq 4$.
\end{theorem}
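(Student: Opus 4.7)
The plan is to reduce~\eqref{eq:standard_form_ip_statement} to an inequality-form IP over a basis of the kernel lattice of $B$ and then apply Theorem~\ref{thm:format_one_sum} together with the existing algorithms of~\cite{note2016} and~\cite{artmann2017strongly}. Using the Hermite normal form of $B$ one can in polynomial time either detect infeasibility or compute an integer particular solution $x_0 \in \Z^n$ of $Bx = b$ together with an integer matrix $N \in \Z^{n \times (n-m)}$ whose columns form a $\Z$-basis of $\ker(B) \cap \Z^n$. Substituting $x = x_0 + N y$ transforms the IP into
\[\max\bigl\{(N\T c)\T y + c\T x_0 \,\colon\, -Ny \leq x_0,\ y \in \Z^{n-m}\bigr\},\]
an inequality-form IP whose constraint matrix $-N \in \Z^{n \times (n-m)}$ has full column rank.

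The next step is to relate $D(N)$ to $D(B\T)$. A standard Smith-normal-form argument (Jacobi's complementary-minor identity applied to the unimodular factor of the Smith decomposition of $B$) yields $|\det(N_{J,\cdot})| = |\det(B_{\cdot,I})|/g$ for every pair of complementary index sets $I \subseteq [n]$, $|I| = m$, and $J = [n] \setminus I$, where $g := \gcd(D(B\T) \setminus \{0\})$. Hence $D(N) = \{d/g \,\colon\, d \in D(B\T)\}$, so $|D(N)| = |D(B\T)|$, the largest element of $D(N)$ remains bounded by the input constant, and the gcd of the nonzero entries of $D(N)$ equals $1$.

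Now I would apply Theorem~\ref{thm:abc_recognition} to $N$. If it certifies $|D(N)| \geq 4$, we output $|D(B\T)| \geq 4$ and terminate. If it returns $D(N)$ explicitly, we split according to whether $N$ is nondegenerate, which is checked via Lemma~\ref{lemma:efficient_non_degeneracy_recognition}. In the nondegenerate case $\max D(N)$ is constant and~\cite{note2016} solves the IP in polynomial time. In the degenerate case the gcd-$1$ reduction makes $N$ either totally unimodular (when $|D(N) \setminus \{0\}| = 1$, in which case the IP is just an LP), or $\{a',b',0\}$-modular with $\gcd(a',b') = 1$; if $(a',b') = (2,1)$ the strongly polynomial algorithm of~\cite{artmann2017strongly} applies, and otherwise Theorem~\ref{thm:format_one_sum} supplies a block decomposition that we exploit below. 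If Theorem~\ref{thm:abc_recognition} returns only a duplicative relation, the nondegenerate subcase is still covered by~\cite{note2016} (which does not need $D(N)$ explicitly), while in the degenerate subcase the gcd-$1$ property forces the only possibility with $|D(N)| \leq 3$ to be $D(N) = \{2,1,0\}$, so running~\cite{artmann2017strongly} either solves the IP or certifies $|D(B\T)| \geq 4$.

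The main technical step is solving the IP when Theorem~\ref{thm:format_one_sum} applies. The theorem produces a row permutation $P$, a $\pm 1$ diagonal row scaling $S$, and a unimodular column transformation $W$ such that $SP(-N)W$ has the block form of~\eqref{eq:full_one_sum_layout}, with totally unimodular blocks $L$, $R$, and one additional coupling column whose nonzero entries are $\pm a'$ on the top rows and $\pm b'$ on the bottom rows. Setting $\tilde y := W^{-1} y$ and $\tilde x_0 := SP x_0$ yields an equivalent IP in $\tilde y \in \Z^{n-m}$; fixing the single coupling variable $\tilde y_\ast$ associated with the extra column splits the remaining constraints into two independent totally unimodular subsystems, each solvable by linear programming. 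To keep the enumeration over $\tilde y_\ast$ polynomial, I would invoke a classical proximity bound for IPs with bounded subdeterminants: since all minors of $-N$ have absolute value at most a constant, any integer optimum lies within $\mathrm{poly}(n)$ of the LP optimum in every coordinate, so it suffices to enumerate $\tilde y_\ast$ over a polynomial-size window around its LP value. Solving polynomially many TU linear programs and taking the best then yields the optimum. This proximity-based enumeration of the coupling variable is the step that requires the most care; the remaining subproblems reduce to standard TU optimization.
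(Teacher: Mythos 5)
Your overall architecture matches the paper's: reduce~\eqref{eq:standard_form_ip_statement} to an inequality-form \ip over a $\Z$-basis of $\ker(B)\cap\Z^n$ with the subdeterminants divided by $\gcd(D(B\T))$, run the recognition algorithm, hand the nondegenerate case to~\cite{note2016}, and use the decomposition of Theorem~\ref{thm:format_one_sum} to isolate a single non-\tu column in the $\{a,b,0\}$-case. Your proximity-window enumeration of the coupling variable is an acceptable substitute for the paper's step of solving a mixed-integer program with one integer variable, but your justification is wrong as stated: the hypothesis only controls the \emph{maximal} minors of $N$ (via $D(B\T)$), and lower-order minors of $N$ --- even its entries --- can be arbitrarily large, so ``all minors of $-N$ are at most a constant'' is false. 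The proximity bound must instead be applied to the transformed matrix $[T\mid d]$, all of whose minors are bounded by $(n-m)\cdot a$ (polynomial, not constant) by Laplace expansion along the coupling column; with that repair the window is still polynomial. You also never treat the case where the LP relaxation is unbounded, which the paper handles by a separate feasibility reduction with objective $C_{1,\cdot}$.

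The genuine gap is the duplicative-relation branch. You correctly deduce that in the degenerate, $\gcd$-one situation either $|D(N)|\geq 4$ or $D(N)=\{2,1,0\}$, but you have no way to decide which of the two holds: deciding bimodularity is precisely the recognition problem this paper explicitly cannot solve. Hence ``running~\cite{artmann2017strongly} either solves the \ip or certifies $|D(B\T)|\geq 4$'' does not follow --- the bimodular algorithm has no specified behaviour on a non-bimodular input, and even if it terminates with some point you can neither certify that point's optimality nor convert its failure into a witness that $|D(N)|\geq 4$. The paper closes this gap with an argument your proposal is missing: take an optimal \emph{fractional} vertex $v$ of the LP relaxation and restrict to the tight-constraint submatrix $C_{I,\cdot}$. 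Since $v\notin\Z^{n-m}$, Cramer's rule gives $1\notin D(C_{I,\cdot})$, and this extra fact rules out the duplicative failure mode when Theorem~\ref{thm:abc_recognition} is applied a \emph{second} time to $C_{I,\cdot}$; one can therefore genuinely certify that $C_{I,\cdot}$ is bimodular or else conclude $|D(C)|\geq 4$. The algorithm of~\cite{artmann2017strongly} is then run only on the certified cone problem $\ineqcone{h}$, and Theorem~\ref{thm:veselov_structural} (Veselov--Chirkov) turns the outcome into a checkable dichotomy: if the returned point is feasible for $\ineq{h}$ it is optimal because $\ineqcone{h}$ is a relaxation, and if it is infeasible then $C$ cannot be bimodular, so $|D(C)|\geq 4$. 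Without this detour through the tight constraints at a fractional vertex, your duplicative case does not go through.
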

Notably, in Theorem~\ref{thm:standard_optimize}, the assumption that $D(B\T)$ is constant can be dropped if $B$ is \emph{degenerate}, i.e., if $0 \in D(B\T)$.

\subsection{Three Examples of $\{a,b,c\}$-Modular Matrices}
\begin{itemize}
	\item \textit{Generalized network flow.} Let $G = (V,E)$ be a directed graph whose vertices can be partitioned as $V = S \cup \{v\} \cup T$ such that no arc runs between $S$ and $T$, and no arc runs from $v$ to $S$. Consider a generalized network flow problem in $G$, where $s \in S$ and $t \in T$, with capacities $u \colon E \rightarrow \Z_{> 0}$ and gains
	\[
	\gamma(e) := \begin{cases}
		\ \frac{a}{b} & \mbox{if } e \ \text{runs from} \ S \ \text{to} \ v,\\
		\ 1 & \mbox{otherwise},
	\end{cases}
	\]
	where $a$, $b \in \Z_{> 0}$. Then, the natural formulation of finding a maximal $s$-$t$ flow in $G$ with respect to $u$ and $\gamma$ is an integer optimization problem whose constraint matrix $A$ satisfies $D(A\T) \subseteq \{a,b,0\}$ after multiplying the constraint corresponding to $v$ by $b$.\\
	
	\item \textit{Perfect $d$-matching.} Let $G = (V,E)$ be an undirected graph with edge capacities $u \colon E \rightarrow \Z_{> 0} \cup \{\infty\}$, weights $c \colon E \rightarrow \R$, and numbers $d \colon V \rightarrow \Z_{> 0}$. Then, the \emph{perfect $d$-matching problem} is the problem of finding a function $f \colon E \rightarrow \Z_{\geq 0}$ of maximal cost $\sum_{e \in E} f(e) \cdot c(e)$ which satisfies $f(e) \leq u(e)$ for all $e \in E$ and $\sum_{e \in \delta(v)} f(e) = d(v)$ for all $v \in V$, see \cite{hupp2017integer,kortevygen}.\\
	
	Consider the following variation of this problem: Let $G_1 = (V_1, E_1)$ and $G_2 = (V_2, E_2)$ be two bipartite graphs with edge capacities $u \colon E_1 \cup E_2 \rightarrow \Z_{> 0} \cup \{\infty\}$, weights $c \colon E_1 \cup E_2 \rightarrow \R$, and numbers $d \colon V_1 \cup V_2 \rightarrow \Z_{> 0}$. Let $a$, $b \in \Z_{> 0}$, and $g \in \Z$. Fix edges $e_1 \in E_1$ and $e_2 \in E_2$. Then, the natural formulation of finding an optimal, perfect $d$-matching $f$ in $(V_1 \cup V_2, E_1 \cup E_2)$ w.r.t. $u$, $c$, and $d$ and under the additional constraint that
	\[ \pm a \cdot f(e_1) \pm b \cdot f(e_2) = g, \]
	is an integer optimization problem whose constraint matrix $A$ satisfies $D(A\T) \subseteq \{a,b,0\}$. The signs of $a$ and $b$ can be different.\\
	
	\item \textit{Edge-weighted vertex cover.} Given an undirected graph $G = (V,E)$ and weights $w \colon E \rightarrow \Z_{\geq 0}$, an \emph{edge-weighted vertex cover} is a function $f \colon V \rightarrow \Z_{\geq 0}$ such that for each $e = \{u,v\} \in E$, $f(u) + f(v) \geq w(e)$.\\
	
	Consider the following variation of this problem: Let $G_1 = (V_1, E_1)$ and $G_2 = (V_2, E_2)$ be two bipartite graphs with weights $w \colon E_1 \cup E_2 \rightarrow \Z_{\geq 0}$ and costs $c \colon V_1 \cup V_2 \rightarrow \R_{\geq 0}$. Let $a$, $b \in \Z_{> 0}$. Then, the natural formulation of finding a minimal edge-weighted vertex cover $f$ in $G_1 \cup G_2$ w.r.t. $c$, where the constraints are replaced by 
	\begin{align*}
		f(u) + f(v) \pm a \cdot z \geq & ~ w(e), \ \forall e = \{u,v\} \in E_1,\\
		f(u) + f(v) \pm b \cdot z \geq & ~ w(e), \ \forall e = \{u,v\} \in E_2,
	\end{align*}
	is an integer optimization problem whose constraint matrix is $\{a,b\}$- or $\{a,b,0\}$-modular.\footnote{The proof of Theorem~\ref{thm:standard_optimize} contains an algorithm which solves such \ips in inequality form efficiently if the condition $\gcd(a,b) = 1$ is fulfilled.} Note in particular that the signs of the two constraints can be different.
\end{itemize}
	
	\section{Notation and Preliminaries}
	\label{sct:preliminaries}

For $k \in \Z_{\geq 1}$, $[k] := \{1, \ldots, k\}$. $\mathcal I_n$ is the $n$-dimensional unit matrix, where we leave out the subscript if the dimension is clear from the context. For a matrix $A \in \Z^{m \times n}$, we denote by $A_{i,\cdot}$ the $i$-th row of $A$. For a subset $I$ of $[m]$, $A_{I,\cdot}$ is the submatrix formed by selecting all rows with indices in $I$, in increasing order. An analogous notation is used for the columns of $A$. For $k \in \Z$, we write $I_k :=\{ i\in [m]\colon A_{i,n} = k \}$, the indices of the rows whose $n$-th entry is equal to $k$. Set $||A||_{\infty}:=\max_{i \in [m], j \in [n]} |A_{i,j}|$. For simplicity, we assume throughout the document that the input matrix $A$ to any recognition algorithm satisfies $m \geq n$ and $\rank(A) = n$ as $\rank(A) < n$ implies that $D(A) =\{ 0 \}$.
Left-out entries in figures and illustrations are equal to zero.

$D(A)$ is preserved under elementary column operations, permutations of rows and multiplications of rows by $-1$. By a series of elementary column operations on $A$, any nonsingular $n \times n$ submatrix $B$ of $A$ can be transformed to its Hermite normal form (\hnf), in which $B$ becomes a lower-triangular, nonnegative submatrix with the property that each of its rows has a unique maximum entry on its main diagonal~\cite{Schrijver_IP}. This can be done in time polynomial in $n$, $m$, and $\log ||A||_{\infty}$~\cite{storjohann1996asymptotically}.

At various occasions we will make use of the following simple adaptation of the \hnf described in~\cite[Section~3]{note2016}.
Begin by choosing a nonsingular $n \times n$ submatrix $B$ of $A$: Either, the choice of $B$ will be clear from the context or otherwise, choose $B$ to be any nonsingular $n \times n$ submatrix of $A$. Permute the rows of $A$ such that $A_{[n],\cdot} = B$. Apply elementary column operations to $A$ such that $B$ is in \hnf. After additional row and column permutations and multiplications of rows by $-1$,
\begin{align}\label{eq:HNF}
A = \left[ \begin{matrix}
1 &&&&&& ~ \\
& \ddots &&&&& ~ \\
&&1&&&& ~ \\
*&\cdots&*& \delta_1 &&& ~ \\
\vdots&\vdots&\vdots&\ddots& \ddots && ~ \\
*&\cdots&\cdots&\cdots&*& \delta_l& ~ \\
*&\cdots&\cdots&\cdots&\cdots&  * & A_{n,n} \\
A_{n+1,1} &\cdots&\cdots&\cdots&\cdots& A_{n+1,n-1} & A_{n+1,n} \\
\vdots & \vdots & \vdots & \vdots& \vdots& \vdots& \vdots \\
A_{m,1} &\cdots&\cdots&\cdots&\cdots& A_{m,n-1} & A_{m,n}
\end{matrix} \right],
\end{align}
where $A_{\cdot,n} \geq 0$, $|\det(B)| = (\prod_{i=1}^l \delta_i) \cdot A_{n,n}$, all entries marked by $\ast$ are numbers between $0$ and the corresponding entry on the main diagonal minus one, and $\delta_i \geq 2$ for all $i \in [l]$. In particular, the rows of $A_{[n],\cdot}$ whose entries on the main diagonal are strictly larger than one are at positions $n-l, \ldots, n$.\\

We note that it is not difficult to efficiently recognize nondegenerate matrices given a constant upper bound on $|D(A)|$. This will allow us to exclude the nondegenerate case in all subsequent algorithms. We wish to emphasize that the results in~\cite{note2016} can be applied to solve this task given that $\max\{ k\colon k\in D(A) \}$ is constant.
\begin{lemma} \label{lemma:efficient_non_degeneracy_recognition}
	Given a constant $d\in\mathbb{Z}$, there exists an algorithm that solves the following recognition problem in time polynomial in $n$, $m$, and $\log ||A||_{\infty}$: Either, calculate $D(A)$, or give a certificate that $|D(A)| \geq d+1$, or that $0 \in D(A)$.
\end{lemma}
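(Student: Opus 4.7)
I would proceed in three phases: preprocessing via HNF, an iterative pivoting exploration of the row matroid, and a final verification step that invokes the algorithm of~\cite{note2016}. The preprocessing computes a nonsingular $n \times n$ submatrix $B_0$ of $A$ by Gaussian elimination (which exists since $\rank(A) = n$) and transforms $A$ to the form~\eqref{eq:HNF}. This produces one value $|\det(B_0)|$ for $D(A)$ and, through the triangular structure of the first $n$ rows, allows the determinant of any $n \times n$ submatrix obtained from $B_0$ by a single row swap to be computed in polynomial time via cofactor expansion.

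For the pivoting phase, I would maintain a growing set $S \subseteq \Z_{\geq 0}$ of absolute subdeterminant values seen so far (initially $\{|\det(B_0)|\}$) and a queue $Q$ of representative nonsingular submatrices (initially $\{B_0\}$). While $Q \neq \emptyset$ and $|S| \leq d$, pop a submatrix $B$ and enumerate its $O(nm)$ 1-row-swap neighbors, computing the absolute determinant of each. If a zero appears, output that $0 \in D(A)$. If a previously unseen positive value appears, add it to $S$ and push the corresponding submatrix onto $Q$; if $|S|$ then exceeds $d$, output that $|D(A)| \geq d+1$. Since a submatrix enters $Q$ only when it contributes a new value, $Q$ admits at most $d+1$ insertions, so the overall work is polynomial in $n$, $m$, and $\log ||A||_\infty$.

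If this exploration terminates with $|S| \leq d$ and no zero encountered, I would invoke the algorithm of~\cite{note2016} to verify that $S = D(A)$. Here the Hadamard bound $\max D(A) \leq n^{n/2} ||A||_\infty^n$ combined with the promise $|D(A)| \leq d$ should suffice either to certify $S = D(A)$ or to uncover a missing value (possibly zero), in which case the appropriate certificate is returned.

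The main obstacle is the final verification step: the 1-row-swap exploration from the representatives in $Q$ may not reach every nonsingular $n \times n$ submatrix of $A$, and~\cite{note2016} is formally phrased for \emph{constant} $\max D(A)$ rather than for the polynomial Hadamard bound on $\max D(A)$ that is available here. I would close the gap either by a matroid-theoretic connectivity argument---showing that once the 1-swap closure of the representatives stabilizes with at most $d$ distinct values, those values exhaust $D(A)$---or by adapting the pigeonhole-style enumeration of~\cite{note2016} so that its running time depends on $\log \max D(A)$ rather than $\max D(A)$, which is facilitated by the constant bound $d$ on $|D(A)|$.
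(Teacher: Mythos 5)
There is a genuine gap, and you have correctly located it yourself: the algorithm as described does not return a correct answer in the case where the swap exploration stabilizes with $|S| \leq d$ and no zero encountered. Your exploration only examines the $O(nm)$ single-row-swap neighbours of those bases that contributed a \emph{new} value to $S$; a subdeterminant value (including $0$) that is reachable from $B_0$ only by a longer sequence of swaps, all of whose intermediate bases carry values already in $S$, will never be discovered. The basis-exchange graph of the row matroid is indeed connected, but it can have exponentially many vertices, and your set of explored representatives has no closure property under exchange, so the ``matroid-theoretic connectivity argument'' you hope for is not available in this form. The fallback to~\cite{note2016} is also unavailable: that algorithm requires both that $A$ be nondegenerate (which is precisely the property you are trying to decide, a circularity) and that $\max\{k \colon k \in D(A)\}$ be constant, whereas here it is only controlled by the Hadamard bound, which is exponential in the encoding size; ``adapting'' it as you suggest amounts to proving a new theorem.

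The paper's proof rests on a different, purely combinatorial idea that your proposal is missing: a nondegenerate matrix with $|D(A)| \leq d$ and $n \geq 2$ has at most $(n-1) + d \cdot (2 \cdot d + 1)$ rows. This is shown by bringing $A$ into the form~\eqref{eq:HNF} and studying the $2 \times 2$ minors of the last two columns, each of which extends to an $n \times n$ subdeterminant of $A$: the rows $n, \ldots, m$ fall into at most $d$ bins according to their entry in the $n$-th column (distinct bins yield pairwise distinct positive subdeterminants $\Theta_{n-1,j}$, and a zero entry yields a vanishing one), and each bin contains at most $2 \cdot d + 1$ rows (distinct rows within a bin yield distinct signed subdeterminants). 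Consequently, if $m$ exceeds this bound one reads off either $d+1$ distinct elements of $D(A)$ or a zero subdeterminant from an explicit polynomial-size family of minors, and if $m$ is below the bound one simply enumerates all $\binom{(n-1)+d \cdot (2 \cdot d+1)}{n}$ maximal subdeterminants, a polynomial number for constant $d$. You would need this, or an equally global counting argument, to close your verification step.
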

\begin{proof}
	We first prove that for $n \geq 2$, any nondegenerate matrix $A \in \Z^{m \times n}$ with $|D(A)| \leq d$ has at most $(n-1)+d \cdot (2\cdot d+1)$ rows. Given such a matrix $A$, transform it to~\eqref{eq:HNF}. We introduce the following family of $2 \times 2$ subdeterminants: For any $i$, $j \geq n-1$, $i \neq j$, set
	\[
		\theta_{i,j} := \det \begin{bmatrix} A_{i,n-1} & A_{i,n} \\ A_{j,n-1} & A_{j,n} \end{bmatrix}.
	\]
	Any such subdeterminant can be extended to an $n \times n$ subdeterminant $\Theta_{i,j}$ of $A$ by appending rows $1, \ldots, n-2$ and columns $1, \ldots, n-2$ to $\theta_{i,j}$. Applying Laplace expansion yields $\Theta_{i,j} = (\prod_{k=1}^{l-1} \delta_k) \cdot \theta_{i,j}$.
	
	Starting with row $n$, partition the rows of $A$ into submatrices (bins) $A[1]$, \ldots, $A[s]$ such that the rows with the same entry in the $n$-th column are in the same bin. Thus, $s$ is the number of different entries in the $n$-th column, starting from the $n$-th row. Denote by $I[i]$ the indices of the rows in $A[i]$, i.e., $A_{I[i],\cdot} = A[i]$. In what follows, we first derive a bound on $s$ and subsequently bound the number of rows in each bin.
	
	Regarding the former, as $A_{n-1,n} = 0$, it holds that $\theta_{n-1,j} = \delta_l \cdot A_{j,n}$ for any $j \geq n$. This implies that $A_{j,n} \neq 0$ for any $j \geq n$ as otherwise, $\Theta_{n-1,j} = 0$, contradicting the nondegeneracy of $A$. As we have assumed that $A_{\cdot,n} \geq 0$, it follows that $A_{\cdot,n} > 0$. Therefore, $\theta_{n-1,j} > 0$ and $\Theta_{n-1,j} > 0$ for any $j \geq n$. Varying $j$ among different bins induces pairwise different and nonnegative subdeterminants $\Theta_{n-1,j}$, i.e., $s \leq d$.
	
	Next, we derive a bound on the number of rows of each bin. Choose an arbitrary bin $k \in [s]$. Let $i$ be the smallest element of $I[k]$ and let $j_1$, $j_2 \in I[k] \setminus \{i\}$, $j_1 \neq j_2$. As $A_{i,n} = A_{j_1,n} = A_{j_2,n}$, it holds that
	\begin{align*}
		\text{(i)} & ~~ \theta_{j_1,j_2} = A_{i,n} \cdot (A_{j_1,n-1} - A_{j_2,n-1}),\\
		\text{(ii)} & ~~ \theta_{i,j_1} = A_{i,n} \cdot (A_{i,n-1} - A_{j_1,n-1}),\\
		\text{(iii)} & ~~ \theta_{i,j_2} = A_{i,n} \cdot (A_{i,n-1} - A_{j_2,n-1}).
	\end{align*}
	From (i), it follows that $A_{j_1,n-1} \neq A_{j_2,n-1}$ as otherwise, $\theta_{j_1,j_2} = 0$. Regarding (ii) and (iii), note that $A_{j_1,n-1} \neq A_{j_2,n-1} \Leftrightarrow \theta_{i,j_1} \neq \theta_{i,j_2}$. This is equivalent to $\Theta_{i,j_1} \neq \Theta_{i,j_2}$. In other words, every element of $I[k] \setminus \{i\}$ yields a different $n \times n$ subdeterminant. Since those $n \times n$ subdeterminants are not necessarily different in absolute value, $|I[k] \setminus \{i\}| \leq 2 \cdot d$ and $|I[k]| \leq 2 \cdot d+1$. Combining this bound with our bound on the number of bins, we obtain $m\leq (n-1) + d \cdot (2 \cdot d+1)$.\\
	
	From the result above we straightforwardly deduce the following simple recognition algorithm: Let $A$ be the input matrix. If $n=1$ perform an exhaustive search. If $m \leq (n-1)+d \cdot (2 \cdot d+1)$, enumerate all $\binom{(n-1)+d \cdot (2 \cdot d+1)}{n}$ $n \times n$ subdeterminants. Otherwise, we find at least $d+1$ elements of $D(A)$ or an $n \times n$ subdeterminant equal to zero as described above.\qed
\end{proof}
	
	\section{Proof of Theorem \ref{thm:format_one_sum}}
	\label{sct:struct_prop}

Transform $A$ to~\eqref{eq:HNF}, thus $A_{\cdot,n} \geq 0$. As a first step we show that we may assume that $A_{n,n} > 1$ without loss of generality. For this purpose, assume that $A_{n,n} = 1$, i.e., that $l=0$. This implies that $A_{[n],\cdot} = \mathcal I_n$. Consequently, any nonsingular submatrix $B$ of $A$ can be extended to an $n \times n$ submatrix with the same determinant in absolute value by (a) appending unit vectors from the topmost $n$ rows of $A$ to $B$ and (b) for each unit vector appended in step (a), by appending the column to $B$ in which this unit vector has its nonzero entry. By Laplace expansion, the $n \times n$ submatrix we obtain admits the same determinant in absolute value. Therefore, if we identify any submatrix of $A$ with determinant larger than one in absolute value, we can transform $A$ once more to~\eqref{eq:HNF} with respect to its corresponding $n \times n$ submatrix which yields $A_{n,n} > 1$ as desired. To find a subdeterminant of $A$ of absolute value larger than one, if present, test $A$ for total unimodularity. If the test fails, it returns a desired subdeterminant. If the test returns that $A$ is \tu, then $a=b=1$ and $A$ is already of the form $\begin{bmatrix} L & \sfrac{0}{1} \end{bmatrix}$ for $L$ \tu.

The $n$-th column is not divisible by any integer larger than one as otherwise, all $n \times n$ subdeterminants of $A$ would be divisible by this integer, contradicting $\gcd(\{a,b\}) = 1$. In particular, since $A_{n,n} > 1$, $A_{\cdot,n}$ is not divisible by $A_{n,n}$. This implies that there exists an entry $A_{k,n} \neq 0$ such that $A_{k,n} \neq A_{n,n}$. Thus, there exist two $n \times n$ subdeterminants, $\det(A_{[n],\cdot})$ and $\det(A_{[n-1] \cup k,\cdot})$, of different absolute value. This allows us to draw two conclusions: First, the precondition $A_{n,n} > 1$ which we have established in the previous paragraph implies $a > b$. We may therefore assume for the rest of the proof that $a \geq 3$ as $\{a,b\} \neq \{2,1\}$. Secondly, it follows that $l=0$: For the purpose of contradiction assume that $l \geq 1$. Then the aforementioned two subdeterminants $\det(A_{[n],\cdot})$ and $\det(A_{[n-1] \cup k,\cdot})$ are both divisible by $\prod_{i=1}^l \delta_i > 1$. Since one of those subdeterminants must be equal to $\pm a$ and the other must be equal to $\pm b$, this contradicts our assumption of $\gcd(\{a,b\}) = 1$.

As a consequence of $l=0$, the topmost $n-1$ rows of $A$ form unit vectors. Thus, any subdeterminant of rows $n, \ldots, m$ of $A$ which includes elements of the $n$-th column can be extended to an $n \times n$ subdeterminant of the same absolute value by appending an appropriate subset of these unit vectors. To further analyze the structure of $A$ we will study the $2 \times 2$ subdeterminants
\begin{align} \label{eq:2x2_extended_blockmat}
	\theta^h_{i,j} := \det \begin{bmatrix} A_{i,h} & A_{i,n} \\ A_{j,h} & A_{j,n} \end{bmatrix},
\end{align}
where $i$, $j \geq n$, $i \neq j$, and $h \leq n-1$. It holds that $|\theta^h_{i,j}| \in \{a,b,0\}$.
\begin{claimenv} \label{claim:claim1}
	There exists a sequence of elementary column operations and row permutations which turn $A$ into the form
	\begin{align} \label{eq:partial_one_sum_layout}
	\begin{bmatrix}
	\mathcal I_{n-1} & \ \\
	\sfrac{0}{\pm \hspace{-0.25em} 1} & a \\
	\sfrac{0}{\pm \hspace{-0.25em} 1} & b \\
	\sfrac{0}{\pm \hspace{-0.25em} 1} & 0 \end{bmatrix},
	\end{align}
	where $[\begin{smallmatrix}\sfrac{0}{\pm \hspace{-0.25em} 1} & a\end{smallmatrix}]$, $[\begin{smallmatrix}\sfrac{0}{\pm \hspace{-0.25em} 1} & b\end{smallmatrix}]$, and $[\begin{smallmatrix}\sfrac{0}{\pm \hspace{-0.25em} 1} & 0\end{smallmatrix}]$ are submatrices consisting of rows whose first $n-1$ entries lie in $\{0, \pm 1\}$, and whose $n$-th entry is equal to $a$, $b$, or $0$, respectively.
\end{claimenv}
\begin{proof}[Proof of Claim~\ref{claim:claim1}]
	We start by analyzing the $n$-th column of $A$. To this end, note that $|\det(A_{[n-1] \cup k,\cdot})| = |A_{k,n}| \in \{a,b,0\}$ for $k \geq n$. It follows that $A_{\cdot,n} \in \{a,b,0\}^m$ as $A_{\cdot,n} \geq 0$. In addition, by what we have observed two paragraphs earlier, at least one entry of $A_{\cdot,n}$ must be equal to $a$ and at least one entry must be equal to $b$. Sort the rows of $A$ by their respective entry in the $n$-th column as in~\eqref{eq:partial_one_sum_layout}.
	
	In the remaining proof we show that after column operations, $A_{\cdot,[n-1]} \in \{0, \pm 1\}^{m \times (n-1)}$. To this end, let $h \in [n-1]$ be an arbitrary column index. Recall that $I_k :=\{ i\in [m]\colon A_{i,n} = k \}$. We begin by noting a few properties which will be used to prove the claim. For $i \in I_a$ and $j \in I_b$, it holds that $\theta^h_{i,j} = b \cdot A_{i,h} - a \cdot A_{j,h} \in \{\pm a, \pm b, 0\}$. These are Diophantine equations which are solved by
	\begin{align} \label{eq:dioph_structural}
		\text{(i)} & ~~ (A_{i,h},A_{j,h}) = (k \cdot a,\mp1+k \cdot b),\ k \in \Z,\nonumber\\
		\text{(ii)} & ~~ (A_{i,h},A_{j,h}) = (\pm 1 + k \cdot a,k \cdot b),\ k \in \Z,\\
		\text{(iii)} & ~~ (A_{i,h},A_{j,h}) = (k \cdot a,k \cdot b),\ k \in \Z.\nonumber
	\end{align}
	Furthermore, for any $i_1$, $i_2 \in I_a$, it holds that $\theta^h_{i_1,i_2} = a \cdot (A_{i_1,h} - A_{i_2,h})$. Since this quantity is a multiple of $a$ and since $\theta^h_{i_1 ,i_2}\in \{\pm a, \pm b, 0\}$, it follows that
	\begin{align} \label{eq:dist1_structural}
		|A_{i_1,h} - A_{i_2,h}| \leq 1,~\forall i_1, i_2 \in I_a.
	\end{align}
	We now perform a column operation on $A_{\cdot,h}$: Let us fix arbitrary indices $p \in I_a$ and $q \in I_b$. The pair $(A_{p,h},A_{q,h})$ solves one of the three Diophantine equations for a fixed $k$. 
	Add $(-k) \cdot A_{\cdot,n}$ to $A_{\cdot,h}$. Now, $(A_{p,h},A_{q,h}) \in \{ (0,\mp 1), (\pm 1,0), (0,0) \}$. We claim that as a consequence of this column operation, $A_{\cdot,h} \in \{0, \pm 1\}^m$.
	
	We begin by showing that $A_{I_a,h}$ and $A_{I_b,h}$ have entries in $\{0, \pm 1\}$. First, assume for the purpose of contradiction that there exists $j \in I_b$ such that $|A_{j,h}| > 1$. This implies that the pair $(A_{p,h},A_{j,h})$ satisfies~\eqref{eq:dioph_structural} for $|k| \geq 1$. As $a \geq 3$, this contradicts that $A_{p,h} \in \{0, \pm 1\}$. Secondly, assume for the purpose of contradiction that there is $i \in I_a$ such that $|A_{i,h}| > 1$.\footnote{We cannot use the same approach as for the first case as $b$ could be equal to $1$ or $2$.} As $|A_{p,h}| \leq 1$, it follows from~\eqref{eq:dist1_structural} that $|A_{i,h}| = 2$ and that $|A_{p,h}| = 1$. Therefore, as either $A_{p,h}$ or $A_{q,h}$ must be equal to zero, $A_{q,h} = 0$. This implies that $|\theta^h_{i,q}| = 2 \cdot b$ which is a contradiction to $\theta^h_{i ,q}\in \{\pm a, \pm b, 0\}$ as $A$ has no duplicative relation, i.e., $2 \cdot b \neq a$.
	
	It remains to prove that $A_{I_0,h}$ has entries in $\{0, \pm 1\}$. For the purpose of contradiction, assume that there exists $i\in I_0$, $i \geq n$, such that $|A_{i,h}| \geq 2$. Choose any $s \in I_a$. Then,
	\[ \theta^h_{s,i} = \det \begin{bmatrix} A_{s,h} & a \\ A_{i,h} & 0 \end{bmatrix} = -a \cdot A_{i,h}, \]
	which is larger than $a$ in absolute value, a contradiction.\qed
\end{proof}
In the next claim, we establish the desired block structure~\eqref{eq:full_one_sum_layout}. We will show afterwards that the blocks $L$ and $R$ are \tu.
\begin{claimenv} \label{claim:claim2}
	There exists a sequence of row and column permutations which turn $A$ into~\eqref{eq:full_one_sum_layout} for matrices $L$ and $R$ with entries in $\{0, \pm 1\}$.
\end{claimenv}
\begin{proof}[Proof of Claim~\ref{claim:claim2}]
	For reasons of simplicity, we assume that $A$ has no rows of the form $\begin{bmatrix} 0 & \cdots & 0 & a \end{bmatrix}$ or $\begin{bmatrix} 0 & \cdots & 0 & b \end{bmatrix}$ as such rows can be appended to $A$ while preserving the properties stated in this claim. We construct an auxiliary graph $G = (V,E)$, where we introduce a vertex for each nonzero entry of $A_{\cdot,[n-1]}$ and connect two vertices if they share the same row or column index. Formally, set
	\begin{align*}
		V := & \ \{ (i,j) \in [m] \times [n-1] \colon A_{i,j} \neq 0 \},\\
		E := & \ \{ \{(i_1, j_1),(i_2 ,j_2)\} \in V \times V \colon i_1 = i_2\ \text{or} \ j_1 = j_2,\ (i_1,j_1) \neq (i_2,j_2) \}.
	\end{align*}
	Let $K_1, \ldots, K_k \subseteq V$ be the vertex sets corresponding to the connected components in $G$. For each $l \in [k]$, set
	\begin{align*}
		\bfi l := \{ i \in [m] \colon \exists j \in [n-1] \ \text{s.t.} \ (i,j) \in K_l \},\\
		\bfj l := \{ j \in [n-1] \colon \exists i \in [m] \ \text{s.t.} \ (i,j) \in K_l \}.
	\end{align*} 
	These index sets form a partition of $[m]$, resp. $[n-1]$: Since every row of $A_{\cdot,[n-1]}$ is nonzero, $\bigcup_{l \in [k]} \bfi l = [m]$ and since $\rank(A) = n$, every column of $A$ contains a nonzero entry, i.e., $\bigcup_{l \in [k]} \bfj l = [n-1]$. Furthermore, by construction, it holds that $\bfi p \cap \bfi q = \emptyset$ and $\bfj p \cap \bfj q = \emptyset$ for all $p \neq q$. The entries $A_{i,j}$ for which $(i,j) \notin \bigcup_{l\in [k]} (\bfi l \times \bfj l)$ are equal to zero. Therefore, sorting the rows and columns of $A_{\cdot,[n-1]}$ with respect to the partition formed by $\bfi l$, resp. $\bfj l$, $l \in [k]$, yields
	\begin{equation} \label{eq:block_structure}
		A_{\cdot,[n-1]} = \begin{bmatrix} A_{\bfi 1, \bfj 1} & 0 & \cdots & 0\\ 0 & \ddots & \ddots & \vdots \\ \vdots & \ddots & \ddots & 0 \\ 0 & \cdots & 0 & A_{\bfi k, \bfj k} \end{bmatrix}.
	\end{equation} 
	In what follows we show that for all $l \in [k]$, it holds that either $\bfi l \cap I_a = \emptyset$ or that $\bfi l \cap I_b = \emptyset$. From this property the claim readily follows: We obtain the form~\eqref{eq:full_one_sum_layout} from~\eqref{eq:block_structure} by permuting the rows and columns such that the blocks which come first are those which correspond to the connected components $K_l$ with $\bfi l \cap I_a \neq \emptyset$. For the purpose of contradiction, assume that there exists $l \in [k]$ such that $\bfi l \cap I_a \neq \emptyset$ and $\bfi l \cap I_b \neq \emptyset$. Set $K_l^a := \{(i,j) \in K_l \colon i \in I_a\}$ and $K_l^b := \{(i,j) \in K_l \colon i \in I_b\}$. Among all paths in the connected component induced by $K_l$ which connect the sets $K_l^a$ and $K_l^b$, let $P := \{(i^{(1)},j^{(1)}), \ldots, (i^{(t)},j^{(t)})\}$ be a shortest one. By construction, $(i^{(1)},j^{(1)})$ is the only vertex of $P$ which lies in $K_l^a$ and $(i^{(t)},j^{(t)})$ is the only vertex of $P$ which lies in $K_l^b$. This implies that $P$ starts and ends with a change in the first component, i.e., $i^{(1)} \neq i^{(2)}$ and $i^{(t-1)} \neq i^{(t)}$. Furthermore, since $P$ has minimal length, it follows from the construction of the edges that it alternates between changing first and second components, i.e., for all $s=1, \ldots, t-2$, $i^{(s)} \neq i^{(s+1)} \Leftrightarrow j^{(s+1)} \neq j^{(s+2)}$ and $j^{(s)} \neq j^{(s+1)} \Leftrightarrow i^{(s+1)} \neq i^{(s+2)}$.
	
	Define $B := A_{\{i^{(1)}, \ldots, i^{(t)}\}, \{j^{(1)}, \ldots, j^{(t)} ,n\}}$. $B$ is a square matrix with $\frac{t+2}{2}$ rows and columns for the following reason: $P$ starts with a change of rows, after which the remaining path of length $t-2$ starts with a change of columns, ends with a change of rows and alternates between row and column changes in between. Thus, $t-2$ is even and in total $P$ changes rows $1 + \frac{t-2}{2}$ times and columns $\frac{t-2}{2}$ times. It follows that $B$ has $1 + \frac{t-2}{2} + 1$ rows and $\frac{t-2}{2} + 1$ columns excluding the column with entries of $A_{\cdot,n}$. Permute the rows and columns of $B$ such that they are ordered with respect to the order $i^{(1)}, \ldots, i^{(t)}$ and $j^{(1)}, \ldots, j^{(t)},n$. We claim that $B$ is of the form
	\[ B = \begin{bmatrix} \pm 1 & \ & \ & \ & \ & a \\
	\pm 1 & \pm 1 & \ & \ & \ & 0 \\
	\ & \pm 1 & \ddots & \ & \ & \vdots \\
	\ & \ & \ddots & \pm 1 & \ & \vdots \\
	\ & \ & \ & \pm 1 & \pm 1 & 0 \\
	\ & \ & \ & \ & \pm 1 & b \end{bmatrix}.\]
	To see this, first observe that the entries on the main diagonal and the diagonal below are equal to $\pm 1$ and that the entries $B_{1,\frac{t+2}{2}}$ and $B_{\frac{t+2}{2},\frac{t+2}{2}}$ are equal to $a$ or $b$, respectively, by construction. As we have observed above, $(i^{(1)},j^{(1)})$ is the only vertex of $P$ which lies in $K_l^a$ and $(i^{(t)},j^{(t)})$ is the only vertex of $P$ which lies in $K_l^b$, implying that all other entries of the rightmost column, $B_{2,\frac{t+2}{2}}, \ldots, B_{\frac{t+2}{2} - 1,\frac{t+2}{2}}$, are equal to zero. For the purpose of contradiction, assume that any other entry of $B$, say $B_{v,w}$, is nonzero. Let us consider the case that $w > v$, i.e., that $B_{v,w}$ lies above the main diagonal of $B$. Among all vertices of $P$ whose corresponding entries lie in the same row as $B_{v,w}$, let $(i^{(s)},j^{(s)})$ be the vertex with minimal $s$. Among all vertices of $P$ whose corresponding entries lie in the same column as $B_{v,w}$, let $(i^{(s')},j^{(s')})$ be the vertex with maximal $s'$. Then, consider the following path: Connect $(i^{(1)},j^{(1)})$ with $(i^{(s)},j^{(s)})$ along $P$; then take the edge to the vertex corresponding to $B_{v,w}$; take the edge to $(i^{(s')},j^{(s')})$; then connect $(i^{(s')},j^{(s')})$ with $(i^{(t)},j^{(t)})$ along $P$. This path is shorter than $P$, a contradiction. An analogous argument yields a contradiction if $w < v-1$, i.e., if $B_{v,w}$ lies in the lower-triangular part of $B$. Thus, $B$ is of the form stated above. By Laplace expansion applied to the last column of $B$, $\det(B) = \pm a \pm b \notin \{\pm a, \pm b, 0\}$ as $2 \cdot b \neq a$. Since $B$ can be extended to an $n \times n$ submatrix of $A$ of the same determinant in absolute value, this is a contradiction.
	
	Regarding the computational running time of this operation, note that $G$ can be created and its connected components can be calculated in time polynomial in $n$ and $m$. A subsequent permutation of the rows and columns as noted above yields the desired form.\qed
\end{proof}
It remains to show that $L$ and $R$ are \tu. We will prove the former, the proof of the latter property is analogous. Assume for the purpose of contradiction that $L$ is not \tu. As the entries of $L$ are all $\pm 1$ or $0$, it contains a submatrix $\tilde A$ of determinant $\pm 2$~\cite[Theorem~19.3]{Schrijver_IP}. Extend $\tilde A$ by appending the corresponding entries of $A_{\cdot,n}$ and by appending an arbitrary row of $A_{I_b,\cdot}$. We obtain a submatrix of the form
\[ \begin{bmatrix} \tilde A & \sfrac{0}{a} \\ 0 & b \end{bmatrix}, \]
whose determinant in absolute value is $2 \cdot b$, a contradiction as $A$ has no duplicative relations. Similarly, one obtains a submatrix of determinant $\pm 2 \cdot a$ if $R$ is not \tu.\qed

	\section{Proof of Theorem \ref{thm:abc_recognition}}
	\label{sct:main_proof}

In this section, the following recognition problem will be of central importance: For numbers $a \geq b \geq c \geq 0$, we say that an algorithm \emph{tests for $\{a,b,c\}$-modularity} if, given an input matrix $A \in \Z^{m \times n}$ ($m \geq n$), it checks whether $D(A) = \{a,b,c\}$. As a first step we state the following lemma which allows us to reduce the $\gcd$ in all subsequent recognition algorithms. The proof uses a similar technique as was used in~\cite[Remark~5]{gribanov2020parametric}.
\begin{lemma}[{cf.~\cite[Remark~5]{gribanov2020parametric}}] \label{lemma:wlog_gcd_1}
	Let $a \geq b > 0$ and $\gamma := \gcd(\{a,b\})$. There exists an algorithm with running time polynomial in $n$, $m$, and $\log ||A||_{\infty}$ which reduces testing for $\{a,b,0\}$-modularity to testing for $\{\frac a \gamma ,\frac b \gamma,0\}$-modularity or returns that $\gcd(D(A)) \neq \gcd(\{a,b\})$, where $A$ is the input matrix.
\end{lemma}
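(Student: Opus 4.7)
The plan is to use the Smith Normal Form (SNF) of $A$ in order to extract a common factor of $\gamma$ from every $n \times n$ subdeterminant. Recall that the SNF supplies unimodular matrices $P \in \mathrm{GL}_m(\Z)$ and $Q \in \mathrm{GL}_n(\Z)$ together with positive integers $s_1 \mid s_2 \mid \cdots \mid s_n$ satisfying
\[
A = P \begin{bmatrix} D \\ 0 \end{bmatrix} Q, \qquad D := \mathrm{diag}(s_1,\ldots,s_n),
\]
together with the classical identity $s_1 s_2 \cdots s_n = \gcd\{\det(A_{I,\cdot}) \colon I \subseteq [m],\, |I|=n\} = \gcd(D(A))$. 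This data, including the transforming matrices $P$ and $Q$, can be computed in time polynomial in $n$, $m$ and $\log \|A\|_\infty$ by classical SNF algorithms.

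First I would compute $d := s_1 \cdots s_n$. If $d \neq \gamma$, then $\gcd(D(A)) \neq \gcd(\{a,b\})$, so the algorithm returns this certificate and stops. Otherwise $d = \gamma$, and I would output the reduced matrix $A' := P_{\cdot,[n]} \in \Z^{m \times n}$ consisting of the first $n$ columns of $P$. Writing $M := DQ \in \Z^{n \times n}$, one has $A = P_{\cdot,[n]} \cdot D \cdot Q = A' \cdot M$ with $|\det M| = \gamma$, so for every row index set $I \subseteq [m]$ with $|I| = n$ the multiplicativity of the determinant gives
\[
\det(A_{I,\cdot}) = \det(A'_{I,\cdot}) \cdot \det(M) = \pm\, \gamma \cdot \det(A'_{I,\cdot}).
\]
Taking absolute values and varying $I$ yields $D(A) = \gamma \cdot D(A')$ as sets, hence $D(A) = \{a,b,0\}$ if and only if $D(A') = \{a/\gamma,\, b/\gamma,\, 0\}$, which is precisely the desired reduction.

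The main subtlety lies not in the algebraic identity but in ensuring that the entries of $P$, $Q$, and therefore of $A'$, stay of polynomial bit-length throughout; this is precisely what polynomial-time SNF algorithms guarantee. Once the SNF computation is available, the entire reduction consists of that single computation followed by reading off the first $n$ columns of $P$, so the overall running time remains polynomial in $n$, $m$ and $\log\|A\|_\infty$ as required.
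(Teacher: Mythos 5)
Your proposal is correct and follows essentially the same route as the paper: both compute the Smith normal form with its unimodular transformation matrices, compare the product of invariant factors to $\gamma$, and extract the reduced matrix as the first $n$ columns of the (inverse) left transformation, so that every $n \times n$ subdeterminant picks up a factor of exactly $\gamma$. The only cosmetic difference is that you argue via the factorization $A = A'M$ with $|\det M| = \gamma$ and multiplicativity of the determinant, whereas the paper phrases the same fact as dividing the columns of $AQ$ by the diagonal entries of $S$.
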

\begin{proof}
	Calculate the transformation matrices which transform $A$ to its Smith normal form: Find $P \in \Z^{m \times m}$ and $Q \in \Z^{n \times n}$ unimodular such that
	\[ PAQ = \begin{bmatrix} S \\ 0 \end{bmatrix}, \]
	where $S \in \Z^{n \times n}$ is a diagonal matrix satisfying $\prod_{i=1}^n S_{i,i} = \gcd(D(A))$, cf.~\cite{Schrijver_IP}. The matrices $P$ and $Q$ can be calculated in time polynomial in $n$, $m$ and $\log ||A||_{\infty}$, see~\cite{Storjohann_PhD}. It must hold that $\prod_{i=1}^n S_{i,i} = \gamma$, otherwise $\gcd(D(A)) \neq \gcd(\{a,b\})$, i.e., we have found a certificate that $A$ is not $\{a,b,0\}$-modular. Since $S$ and $P$ are invertible, we obtain that
	\[ AQS \inv = (P \inv)_{\cdot,[n]}.\]
	As $P$ is unimodular, $P \inv$ is integral, i.e., $AQS \inv$ is integral. Since $Q$ is unimodular, $AQ$ corresponds to performing elementary column operations on $A$, i.e., $D(AQ) = D(A)$. As $AQS \inv$ is integral, the $i$-th column of $AQ$ is divisible by $S_{i,i}$, $1 \leq i \leq n$. Multiplying $A$ by $S\inv$ from the right corresponds to performing these divisions. We conclude that $D(AQS\inv) = \frac{1}{\gcd(D(A))} \cdot D(A)$. Testing $A$ for $\{a,b,0\}$-modularity is therefore equivalent to testing $AQS \inv$ for $\{\frac a \gamma, \frac b \gamma, 0\}$-modularity.\qed
\end{proof}
We proceed by using the decomposition established in Theorem~\ref{thm:format_one_sum} to construct an algorithm which tests for $\{a,b,0\}$-modularity if $a \geq b > 0$ and if $2 \cdot b \neq a$, i.e., if there are no duplicative relations. We will see that this algorithm quickly yields a proof of Theorem~\ref{thm:abc_recognition}. The main difference between the following algorithm and Theorem~\ref{thm:abc_recognition} is that in the former, $a$ and $b$ are fixed input values while in Theorem~\ref{thm:abc_recognition}, $a$, $b$ (and $c$) also have to be determined.
\begin{lemma} \label{lemma:efficient_0ab_modularity_recognition}
	Let $a \geq b > 0$ such that $2 \cdot b \neq a$. There exists an algorithm with running time polynomial in $n$, $m$, and $\log ||A||_{\infty}$ which tests for $\{a,b,0\}$-modularity and, if the result is negative, returns one of the following certificates: An element of $D(A) \setminus \{a,b,0\}$, $\gcd(D(A)) \neq \gcd(\{a,b\})$, or a set $D' \subsetneq \{a,b,0\}$ such that $D(A) = D'$.
\end{lemma}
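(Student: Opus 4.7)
The plan is to make the structural proof of Theorem~\ref{thm:format_one_sum} algorithmic and convert each potential failure into an explicit certificate. First, I apply Lemma~\ref{lemma:wlog_gcd_1} to reduce to the case $\gcd(a,b) = 1$; if the reduction fails, the $\gcd$-mismatch certificate is returned directly. The case $(a,b) = (1,1)$ is then handled by a standard HNF-plus-TU-test: transform $A$ to~\eqref{eq:HNF}; if $A_{n,n} > 1$ an $n \times n$ subdeterminant of absolute value greater than $1$ is exposed; otherwise $A_{[n],\cdot}$ is made equal to $\mathcal I_n$ by column operations, and a TU test on the remaining rows either succeeds (yielding $D(A) \subseteq \{1,0\}$, and we scan for a zero subdeterminant to decide between $\{1,0\}$ and $\{1\}$) or returns a $2 \times 2$ submatrix of absolute determinant $2$ which extends, via the identity rows, to an $n \times n$ certificate.

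For $(a,b) \neq (1,1)$, $\gcd(a,b) = 1$, and $2b \neq a$, I mirror the proof of Theorem~\ref{thm:format_one_sum} step by step. Transform $A$ to~\eqref{eq:HNF}; if $A_{n,n} = 1$, test total unimodularity. If $A$ is \tu then $D(A) \subseteq \{1,0\} \subsetneq \{a,b,0\}$ and we return this strict subset. Otherwise extract a violating subdeterminant of absolute value at least $2$, redo the HNF with respect to it, and iterate; after $O(\log \|A\|_\infty)$ rounds we arrive at $A_{n,n} > 1$. From this point I verify in turn that $l = 0$, that $A_{\cdot,n} \in \{0,a,b\}^m$ (checked via the $2 \times 2$ subdeterminants involving the unit-vector rows of the HNF, each of which extends trivially to an $n \times n$ subdeterminant of $A$), that the column operation of Claim~\ref{claim:claim1} leaves $A_{\cdot,[n-1]} \in \{0,\pm 1\}^{m \times (n-1)}$, and that the connected components of the auxiliary graph from Claim~\ref{claim:claim2} do not mix $I_a$-rows with $I_b$-rows. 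Each check is polynomial in $n$, $m$, and $\log \|A\|_\infty$, and by retracing the contradictions in those proofs I translate every violation into a concrete $n \times n$ subdeterminant outside $\{a,b,0\}$. For instance, a mixed connected component exhibits the submatrix $B$ with $|\det B| \in \{a+b,\,|a-b|\}$, both of which lie outside $\{a,b,0\}$ because $a \neq b$ and $2b \neq a$; and a violation in $A_{I_a,h}$ or $A_{I_b,h}$ yields, via the Diophantine case analysis of Claim~\ref{claim:claim1}, a $2 \times 2$ subdeterminant of absolute value at least $2b$ or $2a$, which is likewise outside $\{a,b,0\}$.

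Once the decomposition~\eqref{eq:full_one_sum_layout} is reached, I run the TU-recognition algorithm on $L$ and on $R$; any non-\tu witness yields, after appending a row from $I_b$ or from $I_a$ together with the last column, an $n \times n$ subdeterminant of absolute value $2b$ or $2a$, outside $\{a,b,0\}$ because $2b \neq a$ and $2a > a$. When all checks pass, $D(A) \subseteq \{a,b,0\}$ is guaranteed and the exact set is read off from the block structure: $a \in D(A) \Leftrightarrow I_a \neq \emptyset$ and $b \in D(A) \Leftrightarrow I_b \neq \emptyset$ (using that $L$ and $R$ contain unit submatrices, so Laplace expansion along the last column produces a subdeterminant of absolute value exactly $a$ or $b$), while $0 \in D(A)$ is decided by a direct rank check on the rows of $I_0$ and on the excess rows of the $L$ and $R$ blocks. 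If the resulting set is a proper subset of $\{a,b,0\}$, we return it as the subset certificate. The main obstacle is the bookkeeping in the structural verification step: every failure mode in the proofs of Theorem~\ref{thm:format_one_sum} must be translated into an explicit $n \times n$ subdeterminant witness rather than a mere contradiction. The Diophantine case analyses in Claims~\ref{claim:claim1} and~\ref{claim:claim2} already provide precisely such witnesses, but they must be traced through carefully; the HNF and Smith-normal-form subroutines cited in the preliminaries guarantee that all column and row manipulations remain within polynomial encoding size.
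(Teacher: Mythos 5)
Your overall architecture (reduce the gcd via Lemma~\ref{lemma:wlog_gcd_1}, dispose of the case $a=b=1$ with an \hnf-plus-\tu test, run the constructive proof of Theorem~\ref{thm:format_one_sum} and convert each derived contradiction into an explicit subdeterminant certificate) is exactly the paper's first half. The gap is in your final verification step. Everything you extract from the proof of Theorem~\ref{thm:format_one_sum} is a collection of \emph{necessary} conditions for $\{a,b,0\}$-modularity, and you assert without argument that once all of them pass --- in particular once $L$ and $R$ are each certified \tu --- we have $D(A) \subseteq \{a,b,0\}$. That implication is false. Take $a=3$, $b=1$ and
\[ A = \begin{bmatrix} 1 & 0 \\ 1 & 3 \\ -1 & 3 \\ 0 & 1 \end{bmatrix}. \]
This matrix is already in the form~\eqref{eq:full_one_sum_layout} with $L = \begin{bmatrix} 1 & 1 & -1\end{bmatrix}\T$ \tu and $R$ empty; the last column has entries in $\{0,3\}$ and $\{0,1\}$ on the respective blocks, the single connected component of the auxiliary graph meets only $I_a$, and every entry of $A_{\cdot,[n-1]}$ lies in $\{0,\pm 1\}$. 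Your algorithm would therefore accept and report $D(A)\subseteq\{3,1,0\}$, yet rows $2$ and $3$ give the subdeterminant $6 = 2a$. The point is that for a square block $[C \mid \sfrac{0}{a}]$ with $C$ \tu, Laplace expansion along the last column gives $a$ times a \emph{signed sum} of minors of $C$, each in $\{0,\pm1\}$, and that sum can exceed $1$ in absolute value even though $C$ itself is \tu.

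This is precisely why the paper's proof does not stop at the decomposition: it proves that $D(A)\subseteq\{a,b,0\}$ is equivalent to the conjunction of (ii) every nonsingular $n\times n$ submatrix of $A_{I_a\cup I_0,\cdot}$ having determinant $\pm a$ and (iii) the analogue for $A_{I_b\cup I_0,\cdot}$ with $\pm b$, and it tests (ii) by dividing the $n$-th column of $A_{I_a\cup I_0,\cdot}$ by $a$ and running the \tu test on the \emph{entire} resulting matrix, last column included (likewise for (iii) with $b$). In the example above this augmented matrix contains $\bigl[\begin{smallmatrix}1 & 1\\ -1 & 1\end{smallmatrix}\bigr]$ of determinant $2$ and is correctly rejected. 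To repair your proof you need this sufficiency argument (the $1$-sum case analysis showing $|\det B| = |\det[C\mid\sfrac{0}{a}]|$ or $|\det[D\mid\sfrac{0}{b}]|$ for any nonsingular $n\times n$ submatrix $B$) together with the augmented \tu tests; checking $L$ and $R$ in isolation cannot be salvaged. Your remaining steps (the certificates from mixed components, the $2\cdot a$ and $2\cdot b$ witnesses, the determination of which of $a$, $b$, $0$ actually occur) are fine, though the paper settles $0\in D(A)$ up front via Lemma~\ref{lemma:efficient_non_degeneracy_recognition} rather than by a rank check at the end.
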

\begin{proof}
	Applying Lemmata~\ref{lemma:efficient_non_degeneracy_recognition} and \ref{lemma:wlog_gcd_1} allows us to assume w.l.o.g. that $0 \in D(A)$ and that $\gcd(\{a,b\}) = 1$. Note that the case $a=b$ implies $a=b=1$. Then, testing for $\{1,0\}$-modularity can be accomplished by first transforming $A$ to~\eqref{eq:HNF} and by subsequently testing whether the transformed matrix is \tu.
	
	Thus, assume that $a > b$. Since $\gcd(\{a,b\}) = 1$, $2 \cdot b \neq a \Leftrightarrow \{a,b\} \neq \{2,1\}$. Therefore, the numbers $a$ and $b$ fulfill the prerequisites of Theorem~\ref{thm:format_one_sum}. Follow the proof of Theorem~\ref{thm:format_one_sum} to transform $A$ to~\eqref{eq:full_one_sum_layout}. If the matrix is $\{a,b,0\}$-modular, we will arrive at a representation of the form~\eqref{eq:full_one_sum_layout}. Otherwise, the proof of Theorem~\ref{thm:format_one_sum} (as it is constructive) exhibits a certificate of the following form: An element of $D(A) \setminus \{a,b,0\}$, $\gcd(D(A)) \neq \gcd(\{a,b\})$, or a set $D' \subsetneq \{a,b,0\}$ such that $D(A) = D'$. Without loss of generality, we may further assume that at least one $n \times n$ subdeterminant of $A$ is equal to $\pm a$ and that at least one $n \times n$ subdeterminant of $A$ is equal to $\pm b$, i.e., that $\{a,b,0\} \subseteq D(A)$.\\
	
	Next, we show that i) holds if and only if both ii) and iii) hold, where
	\begin{enumerate}
		\item every nonsingular $n \times n$ submatrix of $A$ has determinant $\pm a$ or $\pm b$,
		\item every nonsingular $n \times n$ submatrix of $A_{I_a \cup I_0, \cdot}$ has determinant $\pm a$,
		\item every nonsingular $n \times n$ submatrix of $A_{I_b \cup I_0, \cdot}$ has determinant $\pm b$.
	\end{enumerate}
	$A_{I_0,\cdot}$ contains the first $n-1$ unit vectors and hence, $A_{I_a \cup I_0, \cdot}$ and $A_{I_b \cup I_0, \cdot}$ admit full column rank.
	
	We first show that ii) and iii) follow from i). Let us start with iii). For the purpose of contradiction, assume that i) holds but not iii). By construction, the $n$-th column of $A_{I_b \cup I_0,\cdot}$ is divisible by $b$. Denote by $A'$ the matrix which we obtain by dividing the last column of $A_{I_b \cup I_0 ,\cdot}$ by $b$. The entries of $A'$ are all equal to $\pm 1$ or $0$. As we have assumed that iii) is invalid, there exists a nonsingular $n \times n$ submatrix of $A'$ whose determinant is not equal to $\pm 1$. In particular, $A'$ is not \tu. By~\cite[Theorem~19.3]{Schrijver_IP}, as the entries of $A'$ are all $\pm 1$ or $0$ but it is not \tu, it contains a submatrix of determinant $\pm 2$. Since $A'_{\cdot,[n-1]}$ is \tu, this submatrix must involve entries of the $n$-th column of $A'$. Thus, it corresponds to a submatrix of $A_{I_b \cup I_0 ,\cdot}$ of determinant $\pm 2 \cdot b$. Append a subset of the first $n-1$ unit vectors to extend this submatrix to an $n \times n$ submatrix. This $n \times n$ submatrix is also an $n \times n$ submatrix of $A$. Its determinant is $\pm 2 \cdot b$, which is not contained in $\{\pm a,\pm b,0\}$ because $2 \cdot b \neq a$, a contradiction to i). For ii), the same argument yields an $n \times n$ subdeterminant of $\pm 2 \cdot a$, which is also a contradiction to i).
	
	Next, we prove that i) holds if both ii) and iii) hold. This follows from the $1$-sum structure of $A$. Let $B$ be any nonsingular $n \times n$ submatrix of $A$. $B$ is of the form
	\[ B = \begin{bmatrix} C & \ & \sfrac{0}{a} \\
	\ & D & \sfrac{0}{b} \end{bmatrix}, \]
	where $C \in \Z^{m_1 \times n_1}$, $D \in \Z^{m_2 \times n_2}$ for $n_1$, $n_2$, $m_1$, and $m_2$ satisfying $n_1 + n_2 + 1 = n = m_1 + m_2$. As $B$ is nonsingular, $n_i \leq m_i$ and $m_i \leq n_i + 1$, $i \in \{1,2\}$. Thus, $n_i \leq m_i \leq n_i + 1$, $i \in \{1,2\}$, and we identify two possible cases which are symmetric: $m_1 = n_1 + 1$, $m_2 = n_2$ and $m_1 = n_1$, $m_2 = n_2 + 1$. We start with the analysis of the former. By Laplace expansion applied to the last column,
	\[ \det \begin{bmatrix} C & \ & \sfrac{0}{a} \\
	\ & D & \sfrac{0}{b} \end{bmatrix} = \det \begin{bmatrix} C & \ & \sfrac{0}{a} \\
	\ & D & 0 \end{bmatrix} \pm \det \begin{bmatrix} C & \ & 0 \\
	\ & D & \sfrac{0}{b} \end{bmatrix}.\]
	The latter determinant is zero as $m_1 = n_1 + 1$. As the former matrix is block-diagonal, $|\det B| = |\det [C \mid \sfrac{0}{a}]| \cdot | \det D|$. $B$ is nonsingular and $D$ is \tu, therefore $|\det D| = 1$. $[C \mid \sfrac{0}{a}]$ is a submatrix of $A_{I_a \cup I_0,\cdot}$ which can be extended to an $n \times n$ submatrix of the same determinant in absolute value by appending a subset of the $n-1$ unit vectors contained in $A_{I_a \cup I_0,\cdot}$. Therefore by ii), $|\det B| = |\det [C \mid \sfrac{0}{a}]| = a$. In the case $m_1 = n_1$, $m_2 = n_2 + 1$ a symmetric analysis leads to $|\det B| = |\det [ D \mid \sfrac{0}{b}]| = b$ by iii).\\
	
	To test for ii), let $A'$ be the matrix which we obtain by dividing the $n$-th column of $A_{I_a \cup I_0, \cdot}$ by $a$. Then, ii) holds if and only if every nonsingular $n \times n$ submatrix of $A'$ has determinant $\pm 1$. Transform $A'$ to~\eqref{eq:HNF}. The topmost $n$ rows of $A'$ must form a unit matrix. Therefore, ii) is equivalent to $A'$ being \tu, which can be tested efficiently. Testing for iii) can be done analogously.\qed
\end{proof}
We now have all the necessary ingredients to prove Theorem~\ref{thm:abc_recognition}. In essence, what remains is a technique to find sample values $a$, $b$, and $c$ for which we test whether $A$ is $\{a,b,c\}$-modular using our previously established algorithms.
\begin{proof}[Proof of Theorem~\ref{thm:abc_recognition}]
	Apply Lemma~\ref{lemma:efficient_non_degeneracy_recognition} for $d = 3$. Either, the algorithm calculates $D(A)$, or gives a certificate that $|D(A)| \geq 4$, or that $0 \in D(A)$. In the former two cases we are done. Assume therefore that $0 \in D(A)$. By assumption, $A$ has full column rank. Thus, find $k_1 \in D(A)$, $k_1 \neq 0$, using Gaussian Elimination. Apply Lemma~\ref{lemma:efficient_0ab_modularity_recognition} to check whether $D(A) = \{k_1,0\}$. Otherwise, $|D(A)| \geq 3$. At this point, a short technical argument is needed to determine an element $k_2 \in D(A) \setminus \{k_1,0\}$.\\
	
	To do so, we first apply a technique from the proof of Lemma~\ref{lemma:wlog_gcd_1}. This technique allows us to reduce finding $k_2 \in D(A) \setminus \{k_1,0\}$ to finding $k_2 \in D(A') \setminus \{\frac{k_1}{\gcd(D(A))},0\}$ for a matrix $A' \in \Z^{m \times n}$ which satisfies $\gcd(D(A')) = 1$: Calculate the Smith normal form of $A$, i.e., calculate $P \in \Z^{m \times m}$ and $Q \in \Z^{n \times n}$ unimodular such that $PAQ = \begin{bmatrix} S \\ 0 \end{bmatrix}$, where $S$ is a diagonal matrix and $\prod_{i=1}^n S_{i,i} = \gcd(D(A))$. This can be done in time polynomial in $n$, $m$, and $\log ||A||_{\infty}$, see~\cite{Storjohann_PhD}. Then, the matrix $A' := AQS\inv$ is integral, satisfies $D(A') = \frac{1}{\gcd(D(A))} \cdot D(A)$ and consequently has the desired property.
	
	Transform $A'$ to~\eqref{eq:HNF}. Then, $A'_{\cdot,n} \geq 0$. If $A'_{\cdot,n}$ has two nonzero entries, say $A'_{p,n}$ and $A'_{q,n}$, such that $A'_{p,n} \neq A'_{q,n}$, then $\det(A'_{[n-1] \cup p,\cdot}) \neq \det(A'_{[n-1] \cup q,\cdot})$, both of which are nonzero. Thus, either $\det(A'_{[n-1] \cup p,\cdot}) \neq \frac{k_1}{\gcd(D(A))}$ or $\det(A'_{[n-1] \cup q,\cdot}) \neq \frac{k_1}{\gcd(D(A))}$. If such entries do not exist, then $A'_{\cdot,n} \in \{0,1\}^m$ as otherwise, $A'_{\cdot,n}$ would be divisible by an integer larger than one, contradicting $\gcd(D(A')) = 1$. This implies that $A'_{[n],\cdot} = \mathcal I_n$, and in particular that every nonsingular submatrix of $A'$ can be extended to an $n \times n$ submatrix of the same determinant in absolute value. Test $A'$ for total unimodularity. Since we know that $|D(A')| = |D(A)| \geq 3$, $A'$ is not \tu, i.e., the algorithm returns a submatrix of determinant at least $2$ in absolute value. The absolute value of this subdeterminant and $\det(\mathcal I_n) = 1$ are two nonzero elements of $D(A')$, one of which cannot be equal to $\frac{k_1}{\gcd(D(A))}$.\\
	
	Assume w.l.o.g. that $k_1 > k_2$. If $2 \cdot k_2 = k_1$, then $A$ has duplicative relations. If not, test $A$ for $\{k_1,k_2,0\}$-modularity using Lemma~\ref{lemma:efficient_0ab_modularity_recognition}. As $\{k_1, k_2,0\} \subseteq D(A)$, either, this algorithm returns that $D(A) = \{k_1,k_2,0\}$, or a certificate of the form $\gcd(D(A)) \neq \gcd(\{k_1,k_2\})$, or an element of $D(A) \setminus \{k_1,k_2,0\}$. In the first case we are done and in the second and third case it follows that $|D(A)| \geq 4$.\qed
\end{proof}
	
	\section{Proof of Theorem \ref{thm:standard_optimize}}
	\label{sct:optimization}

One ingredient to the proof of Theorem~\ref{thm:standard_optimize} is the following result by
Gribanov, Malyshev, and Pardalos~\cite{gribanov2020parametric} which reduces the standard form \ip~\eqref{eq:standard_form_ip_statement} to an \ip in inequality form in dimension $n-m$ such that the subdeterminants of the constraint matrices are in relation.
\begin{lemma}[{\cite[Corollary~1.1, Remark~5, Theorem~3]{gribanov2020parametric}\footnote{Note that in~\cite{gribanov2020parametric}, the one-to-one correspondence between $D(C)$ and $D(B\T)$ is not explicitly stated in Corollary~1.1 but follows from Theorem~3.}}] \label{lemma:corollary_1_gribanov}
	In time polynomial in $n$, $m$, and $\log ||B||_{\infty}$, \eqref{eq:standard_form_ip_statement} can be reduced to the inequality form \ip
	\begin{align} \label{eq:inequality_form_ip_gcd_1}
		\max \{h\T y \colon Cy \leq g,\ y \in \Z^{n-m} \},
	\end{align}
	where $h \in \Z^{n-m}$, $g \in \Z^n$, and $C \in \Z^{n \times (n-m)}$, with $D(C) = \frac{1}{\gcd(D(B\T))} \cdot D(B\T)$.
\end{lemma}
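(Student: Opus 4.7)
The plan is to parametrize the integer solutions of $Bx = b$ as an affine lattice, apply a change of variables to cast the program in inequality form, and then read off the subdeterminant identity from the Smith normal form. Concretely, I would first compute in polynomial time unimodular matrices $P \in \Z^{m \times m}$, $Q \in \Z^{n \times n}$ and a diagonal $S \in \Z^{m \times m}$ such that $PBQ = [S \mid 0]$ and $\prod_{i=1}^{m}|S_{i,i}| = \gcd(D(B\T))$, using Storjohann's algorithm. If $S$ does not divide the entries of $Pb$ componentwise the original IP is infeasible and the statement is vacuous; otherwise $x_0 := Q\bigl[\begin{smallmatrix}S^{-1}Pb\\0\end{smallmatrix}\bigr]$ is a particular integer solution, and the last $n-m$ columns $K$ of $Q$ form a $\Z$-basis of $\ker_{\Z}(B)$ because $BQ = P^{-1}[S\mid 0]$ has its last $n-m$ columns equal to zero.

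Next, substitute $x = x_0 + Ky$ into~\eqref{eq:standard_form_ip_statement}. The equality $Bx = b$ holds automatically for every $y \in \Z^{n-m}$, the objective $c\T x = c\T x_0 + (K\T c)\T y$ becomes linear in $y$ with cost vector $h := K\T c$, and the nonnegativity $x \geq 0$ rewrites to $-Ky \leq x_0$. Setting $C := -K$ and $g := x_0$ yields the inequality form IP~\eqref{eq:inequality_form_ip_gcd_1}; since $K$ is a $\Z$-basis of the integer kernel, the map $y \mapsto x_0 + Ky$ gives a bijection between the integer feasible sets of~\eqref{eq:standard_form_ip_statement} and~\eqref{eq:inequality_form_ip_gcd_1}, and objective values differ by the constant $c\T x_0$.

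The main obstacle, and the real content of the lemma, is the subdeterminant identity $D(C) = \frac{1}{\gcd(D(B\T))} \cdot D(B\T)$. Since $C=-K$, it suffices to show that for every $I \subseteq [n]$ with $|I|=n-m$ and $J := [n]\setminus I$,
\[\lvert\det(K_{I,\cdot})\rvert = \frac{\lvert\det(B_{\cdot,J})\rvert}{\gcd(D(B\T))}.\]
From $B = P^{-1}[S\mid 0]Q^{-1}$, restricting to columns $J$ and observing that $[S \mid 0]$ annihilates the last $n-m$ rows of $(Q^{-1})_{\cdot,J}$, one obtains $B_{\cdot,J} = P^{-1}\, S\, (Q^{-1})_{[m],J}$, hence $|\det(B_{\cdot,J})| = \gcd(D(B\T)) \cdot |\det((Q^{-1})_{[m],J})|$ because $|\det P| = 1$. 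Jacobi's complementary minor identity applied to the unimodular matrix $Q$ then yields $|\det((Q^{-1})_{[m],J})| = |\det(Q_{I,\{m+1,\dots,n\}})| = |\det(K_{I,\cdot})|$, closing the argument. Ranging over all index subsets $I$ of size $n-m$ (equivalently, all $J$ of size $m$) upgrades the per-minor equality to the set equality $D(C) = \frac{1}{\gcd(D(B\T))} \cdot D(B\T)$.
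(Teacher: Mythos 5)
Your proof is correct and follows essentially the same route as the paper's: compute the Smith normal form of $B$, parametrize the integer solutions of $Bx=b$ via the last $n-m$ columns of the unimodular factor (the paper phrases this as the substitution $z:=Qx$, which yields the identical affine-lattice parametrization), and derive the minor correspondence $D(C)=\frac{1}{\gcd(D(B\T))}\cdot D(B\T)$ from the factor $\det(S)$ together with Jacobi's complementary minor formula applied to the unimodular matrix $Q$. The only differences are notational (your $Q$ is the paper's $Q^{-1}$) and in the order in which the $\gcd$ normalization and the Jacobi step are carried out.
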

To prove this reduction, the authors apply a theorem by Shevchenko and Veselov~\cite{veselovshevchenkodetidentity} which was originally published in Russian. For completeness of presentation, we will provide an alternative but similar proof of Lemma~\ref{lemma:corollary_1_gribanov} which uses the following well-known determinant identity instead of the aforementioned result.
\begin{lemma}[{Jacobi's complementary minor formula, see~\cite[Lemma A.1e]{caracciolo2013algebraic}}] \label{lemma:jacobis_identity}
	Let $A \in \Z^{n \times n}$ be invertible and $I, J \subseteq [n]$, $|I| = |J| = k$ for $k \in [n]$. Then,
	\[ \det(A_{I,J}) = \det (A) \cdot (-1)^{\sum_{i \in I} i + \sum_{j \in J} j} \cdot \det\left(A \inv_{\overline J,\overline I}\right),\]
	where $\overline I := [n] \setminus I$ and $\overline J := [n] \setminus J$.
\end{lemma}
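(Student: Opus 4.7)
\emph{Plan.} The goal is to reduce Jacobi's identity to the special case $I = J = [k]$ by a simultaneous row/column shuffle, and then establish that special case by a one-line block-matrix manipulation.

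\emph{Reduction to $I = J = [k]$.} Write $I = \{i_1 < \cdots < i_k\}$ with complement $\overline I = \{\overline{i}_1 < \cdots < \overline{i}_{n-k}\}$, and similarly for $J$. Let $\alpha \in S_n$ be the shuffle sending $(1, \ldots, n)$ to $(i_1, \ldots, i_k, \overline{i}_1, \ldots, \overline{i}_{n-k})$, and define $\beta$ analogously for $J$. Let $\tilde A$ be obtained from $A$ by applying $\alpha$ to the rows and $\beta$ to the columns, so that $\tilde A_{l,l'} = A_{\alpha(l), \beta(l')}$. Then by construction $\tilde A_{[k],[k]} = A_{I,J}$, and since $\alpha, \beta$ are order-preserving on $[k]$ and on $[n] \setminus [k]$ separately, a short calculation with the inverse of a conjugated matrix yields $(\tilde A)\inv_{\overline{[k]}, \overline{[k]}} = A\inv_{\overline J, \overline I}$. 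A standard inversion count gives $\mathrm{sgn}(\alpha) = (-1)^{\sum_{i \in I} i - k(k+1)/2}$: the only inversions of $\alpha$ are pairs $(p, q)$ with $p \leq k < q$ and $i_p > \overline{i}_{q-k}$, of which there are exactly $i_p - p$ for each such $p$. An analogous formula holds for $\beta$, and since $k(k+1)$ is even, $\mathrm{sgn}(\alpha)\mathrm{sgn}(\beta) = (-1)^{\sum_{i \in I} i + \sum_{j \in J} j}$. Together with $\det(\tilde A) = \mathrm{sgn}(\alpha)\mathrm{sgn}(\beta) \det(A)$, proving the identity for $\tilde A$ in the case $I = J = [k]$ (where the sign factor is trivially $+1$) implies the general case.

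\emph{The special case.} Assume $I = J = [k]$ and partition
\[ A = \begin{bmatrix} A_{11} & A_{12} \\ A_{21} & A_{22} \end{bmatrix}, \qquad A\inv = \begin{bmatrix} B_{11} & B_{12} \\ B_{21} & B_{22} \end{bmatrix}, \]
so that $A_{11} = A_{I,J}$ and $B_{22} = A\inv_{\overline J, \overline I}$. Reading off the $(1,2)$- and $(2,2)$-blocks of $A A\inv = \mathcal I_n$ gives $A_{11} B_{12} + A_{12} B_{22} = 0$ and $A_{21} B_{12} + A_{22} B_{22} = \mathcal I_{n-k}$, hence
\[ A \cdot \begin{bmatrix} \mathcal I_k & B_{12} \\ 0 & B_{22} \end{bmatrix} = \begin{bmatrix} A_{11} & 0 \\ A_{21} & \mathcal I_{n-k} \end{bmatrix}. \]
Both sides are block-triangular with one diagonal block equal to an identity, so taking determinants yields $\det(A) \cdot \det(B_{22}) = \det(A_{11})$, which is Jacobi's identity in this case.

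\emph{Main obstacle.} The block-matrix manipulation is essentially trivial once written down; the only real bookkeeping is the sign calculation in the reduction. One must carefully verify that the two shuffle signs combine to $(-1)^{\sum_{i \in I} i + \sum_{j \in J} j}$ (and not its negative), and that $(\tilde A)\inv_{\overline{[k]},\overline{[k]}}$ equals $A\inv_{\overline J, \overline I}$ as an ordered submatrix rather than merely up to a further permutation. Both follow from the order-preserving property of $\alpha$ and $\beta$ on the two halves $[k]$ and $[n]\setminus[k]$.
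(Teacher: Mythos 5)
The paper does not prove this lemma at all: Jacobi's complementary minor formula is imported as a black box from the cited reference (Lemma~A.1e of Caracciolo--Sokal--Sportiello), so there is no in-paper argument to compare yours against. Your proof is correct and self-contained. Both halves check out: the block identity $A \bigl[\begin{smallmatrix} \mathcal I_k & B_{12} \\ 0 & B_{22}\end{smallmatrix}\bigr] = \bigl[\begin{smallmatrix} A_{11} & 0 \\ A_{21} & \mathcal I_{n-k}\end{smallmatrix}\bigr]$ follows from the $(1,2)$- and $(2,2)$-blocks of $AA\inv = \mathcal I$ and immediately gives $\det(A)\det(B_{22}) = \det(A_{11})$; and the inversion count $\sum_{p\leq k}(i_p - p) = \sum_{i\in I} i - k(k+1)/2$ is right, since for each $p$ exactly $i_p - p$ elements of $\overline I$ lie below $i_p$. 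The one subtlety you correctly flag and correctly resolve is that $(\tilde A)\inv_{\overline{[k]},\overline{[k]}}$ picks out rows of $A\inv$ indexed by $\overline J$ and columns indexed by $\overline I$ (the transposition of index sets in the statement is not a typo), and that the order-preservation of $\alpha$ and $\beta$ on each half makes this an equality of ordered submatrices, not merely equality up to permutation. This is a reasonable argument to include if one wants the paper self-contained, though the authors evidently judged a citation sufficient.
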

\begin{proof}[Alternative proof of Lemma~\ref{lemma:corollary_1_gribanov}]
	We closely follow the proof of~\cite{gribanov2020parametric}. First, we reformulate~\eqref{eq:standard_form_ip_statement} in such a way that the $\gcd$ of the full rank subdeterminants of the constraint matrix becomes $1$. To this end, calculate the Smith normal form of $B$, i.e., find $P \in \Z^{m \times m}$ and $Q \in \Z^{n \times n}$ unimodular and nonsingular such that $B = P[S \mid 0]Q$, where $S \in \Z^{m \times m}$ is a diagonal matrix satisfying $\prod_{i=1}^m S_{i,i} = \gcd(D(B\T))$. This can be done in time polynomial in $m$, $n$, and $\log ||B||_{\infty}$, see~\cite{Storjohann_PhD}. Thus, $Bx = b \Leftrightarrow [\mathcal I_m \mid 0] Q x = S \inv P \inv b$. For simplicity, set $b' := S \inv P \inv b$. If $b' \notin \Z^m$, then \eqref{eq:standard_form_ip_statement} is infeasible. Thus, assume that $b' \in \Z^m$. Summarizing, solving~\eqref{eq:standard_form_ip_statement} is equivalent to solving
	\begin{align}
		\label{eq:standard_form_ip_gcd_1}
		\max \{ c\T x \colon [\mathcal I_m \mid 0] Q x = b',\ x \in \Z^n_{\geq 0}\},
	\end{align}
	where due to multiplying by $S \inv$, $D(([\mathcal I_m \mid 0] Q)\T) = \frac{1}{\gcd(D(B\T))} \cdot D(B\T)$.\\
	
	Secondly, we reduce~\eqref{eq:standard_form_ip_gcd_1} to an \ip in inequality form. Since $Q$ is unimodular, substituting $z := Qx$ yields
	\begin{align*}
		& \ \{x \in \Z^n \colon [\mathcal I_m \mid 0] Q x = b'\}\\ = & \ Q \inv \{ z \in \Z^n \colon z_{[m]} = b'\}\\ = & \ \{Q\inv_{\cdot,[m]} b' + Q\inv_{\cdot,[n] \setminus [m]} y \colon y \in \Z^{n-m} \}.
	\end{align*}
	Plugging this identity into~\eqref{eq:standard_form_ip_gcd_1} yields
	\begin{align*}
		& \quad \max \{c \T (Q\inv_{\cdot,[m]} b' + Q\inv_{\cdot,[n] \setminus [m]} y) \colon Q\inv_{\cdot,[m]} b' + Q\inv_{\cdot,[n] \setminus [m]} y \geq 0,\ y \in \Z^{n-m}\}\\
		= & \quad c \T Q\inv_{\cdot,[m]} b' + \max \{ h\T y \colon C y \leq g,\ y \in \Z^{n-m}\},
	\end{align*}
	where $h\T := c\T Q\inv_{\cdot,[n] \setminus [m]}$, $g := Q\inv_{\cdot,[m]} b'$, and $C := -Q\inv_{\cdot,[n] \setminus [m]}$.
	
	Recall that $D(([\mathcal I_m \mid 0] Q)\T) = \frac{1}{\gcd(D(B\T))} \cdot D(B\T)$. As $[\mathcal I_m \mid 0] Q = Q_{[m],\cdot}$, it remains to show that $D((Q_{[m],\cdot})\T) = D(Q\inv_{\cdot,[n] \setminus [m]})$. Lemma~\ref{lemma:jacobis_identity} applied to $A := Q$ for $I := [m]$ and $J\subseteq [n]$, $|J|=m$, yields
	\[ |\det(Q_{[m],J})| = |\det(Q \inv _{\overline J,[n] \setminus [m]})|, \]
	i.e., the claim follows.\qed
\end{proof}

As a second ingredient to the proof of Theorem~\ref{thm:standard_optimize}, we will make use of some results for \emph{bimodular integer programs} (\bips). \bips are \ips of the form
\[ \max \{c \T x \colon Ax \leq b,\ x \in \Z^n\},\]
where $c \in \Z^n$, $b \in \Z^m$, and $A \in \Z^{m \times n}$ is \emph{bimodular}, i.e., $\rank(A) = n$ and $D(A) \subseteq \{2,1,0\}$. As mentioned earlier,~\cite{artmann2017strongly} proved that \bips can be solved in strongly polynomial time. Their algorithm uses the following structural result for \bips by~\cite{veselov2009Intprobimmat} which will also be useful to us.
\begin{theorem}[{\cite[Theorem~2]{veselov2009Intprobimmat}, as formulated in~\cite[Theorem~2.1]{artmann2017strongly}}] \label{thm:veselov_structural}
	Assume that the linear relaxation $\max \{c\T x \colon Ax \leq b,\ x \in \R^n\}$ of a \bip is feasible, bounded and has a unique optimal vertex solution $v$. Denote by $I \subseteq [m]$ the indices of the constraints which are tight at $v$, i.e., $A_{I,\cdot} v = b_I$. Then, an optimal solution $x^*$ of $\max \{c \T x \colon A_{I,\cdot} x \leq b_I,\ x \in \Z^n\}$ is also optimal for the \bip.
\end{theorem}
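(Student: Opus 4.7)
The plan is to split the claim into two parts: (a) optimality of $x^*$ among BIP-feasible integer points, and (b) BIP-feasibility of $x^*$ itself. Part (a) will be immediate since every integer $y$ with $Ay\leq b$ also satisfies $A_{I,\cdot}y\leq b_I$, so $y$ is feasible for the restricted IP, and the hypothesis on $x^*$ yields $c\T y\leq c\T x^*$. All the work goes into part (b): showing that $Ax^*\leq b$.

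For (b), I would argue by contradiction. Suppose some $j\notin I$ satisfies $A_{j,\cdot}x^*>b_j$. First I would observe that $v$ remains LP-optimal for the relaxation $\max\{c\T x\colon A_{I,\cdot}x\leq b_I\}$: the uniqueness of $v$ as optimum of the full LP translates into the condition that $c\T d<0$ for every nonzero $d$ in the recession cone $C=\{d\colon A_{I,\cdot}d\leq 0\}$, which already certifies $v$ as the unique LP-optimum of the relaxation. Hence $c\T x^*\leq c\T v$ with equality only if $x^*=v$. Next, from bimodularity and Cramer's rule applied to a full-rank subsystem of $A_{I,\cdot}$ defining $v$, I deduce $2v\in\Z^n$. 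If $v$ itself is integer, then $v$ is integer, relaxation-feasible, and uniquely LP-optimal, forcing $x^*=v$, which is BIP-feasible and contradicts the supposed violation.

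The remaining (and main) case is $v\in\frac{1}{2}\Z^n\setminus\Z^n$. Here every non-tight constraint has half-integer slack at $v$: $A_{j,\cdot}v\leq b_j-\frac{1}{2}$ for every $j\notin I$. The natural candidate for producing a contradiction is the reflected point $y:=2v-x^*\in\Z^n$ (integer because $2v$ is), which satisfies $c\T y=2c\T v-c\T x^*>c\T v>c\T x^*$ since $x^*\neq v$, and for which $A_{j,\cdot}y<b_j$ holds for every violated non-tight constraint $j$. The difficulty is that $y$ may violate some tight constraints: for $i\in I$ with $A_{i,\cdot}x^*<b_i$, one has $A_{i,\cdot}y=2b_i-A_{i,\cdot}x^*>b_i$.

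The principal obstacle is therefore to repair $y$ into a relaxation-feasible integer point $\tilde x$ with $c\T\tilde x>c\T x^*$, which would contradict the optimality of $x^*$ for the relaxation. Concretely, I would seek an integer vector $d\in C$ with $A_{I,\cdot}(y+d)\leq b_I$ and $|c\T d|$ small enough that the objective improvement survives. Bimodularity should again be central here: the extreme rays of $C$ admit integer representations with bounded coordinates via Cramer's rule applied to subsystems of $A_{I,\cdot}$, and the half-integer slack of $v$ at the non-tight constraints provides just enough room to carry out the correction without reintroducing a violation of $A_{j,\cdot}$. Quantifying this correction precisely, so that $c\T d$ is strictly less than $c\T v-c\T x^*$ in absolute value, is the delicate accounting that the proof would need to execute carefully.
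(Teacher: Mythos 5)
This theorem is not proved in the paper at all --- it is imported verbatim from Veselov--Chirkov as reformulated by Artmann et al. --- so there is no in-paper argument to compare against; your proposal must stand on its own. Its preparatory steps are all correct: part (a) is immediate; uniqueness of $v$ for the full LP does give $c\T d<0$ for every nonzero $d$ in $C=\{d\colon A_{I,\cdot}d\leq 0\}$, hence $v$ is also the unique optimum of the restricted LP; Cramer's rule gives $2v\in\Z^n$; the case $v\in\Z^n$ forces $x^*=v$; and $A_{j,\cdot}v\leq b_j-\frac{1}{2}$ for $j\notin I$ when $v$ is properly half-integral. The problem is that the entire content of the theorem is concentrated in the ``repair'' step, which you do not execute --- and which, as you have set it up, cannot be executed.

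The flaw is structural, not merely a missing computation. The contradiction you aim for is with the optimality of $x^*$ for the restricted problem $\max\{c\T x\colon A_{I,\cdot}x\leq b_I,\ x\in\Z^n\}$: you want an integer $\tilde x=y+d$ with $A_{I,\cdot}\tilde x\leq b_I$ and $c\T\tilde x>c\T x^*$. No such point exists, by the very definition of $x^*$. A proof by contradiction along these lines is only valid if the construction of $\tilde x$ succeeds \emph{because of} the hypothesis $A_{j,\cdot}x^*>b_j$; but in your sketch that hypothesis enters only through the observation $A_{j,\cdot}y<b_j$, which is irrelevant to the contradiction (constraint $j$ is not among the constraints $\tilde x$ must satisfy). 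The feasibility of the repair depends only on $y=2v-x^*$ and the geometry of $C$, data that are present whether or not $x^*$ violates constraint $j$; if the repair always worked, you would have ``proved'' that no optimal $x^*$ exists. Quantitatively the same problem shows up: the violations of the tight constraints at $y$ are $A_{i,\cdot}y-b_i=b_i-A_{i,\cdot}x^*$, i.e., the slacks of $x^*$, which are unbounded and bear no relation to $c\T v-c\T x^*$, so there is no reason a correcting $d\in C$ with $|c\T d|<2(c\T v-c\T x^*)$ should exist. The known argument runs in the opposite direction: rather than reflecting $x^*$ through $v$ and repairing, one shows (in the nondegenerate case $A_{I,\cdot}=W\in\Z^{n\times n}$, $|\det W|=2$) that every optimal solution of the restricted problem has the form $x^*=v-W\inv e_i$ for some unit vector $e_i$, a single lattice step from $v$; bimodularity of the \emph{full} matrix then gives $|A_{j,\cdot}W\inv e_i|\leq 1$ for every $j\notin I$, and combining this with $A_{j,\cdot}v\leq b_j-\frac{1}{2}$ and the integrality of $A_{j,\cdot}x^*$ yields $A_{j,\cdot}x^*\leq b_j$. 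Your half-integrality and slack observations are exactly the right ingredients, but they must be used to bound how far an optimal $x^*$ can move from $v$, not to justify a reflection.
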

\begin{proof}[Proof of Theorem~\ref{thm:standard_optimize}]
	Using Lemma~\ref{lemma:corollary_1_gribanov}, we reduce the standard form \ip~\eqref{eq:standard_form_ip_statement} to~\eqref{eq:inequality_form_ip_gcd_1}. Note that $\gcd(D(C)) = 1$. Let us denote~\eqref{eq:inequality_form_ip_gcd_1} with objective vector $h \in \Z^{n-m}$ by $\ineq h$ and its natural linear relaxation by $\ineqr h$. We apply Theorem~\ref{thm:abc_recognition} to $C$ and perform a case-by-case analysis depending on the output.
	\begin{enumerate}
		\item The algorithm calculates and returns $D(C)$. If $0 \notin D(C)$, $C$ is nondegenerate and $\ineq{h}$ can be solved using the algorithm in~\cite{note2016}. Thus, assume that $0 \in D(C)$. $C$ has no duplicative relations. As $\gcd(D(C)) = 1$, this implies that $C$ is $\{a,b,0\}$-modular for $a \geq b > 0$, where $\gcd(\{a,b\}) = 1$ and $(a,b) \neq (2,1)$. Thus, $C$ satisfies the assumptions of Theorem~\ref{thm:format_one_sum}. As a consequence of Theorem~\ref{thm:format_one_sum}, there exist elementary column operations which transform $C$ such that its first $n-m-1$ columns are \tu, i.e., there is $U \in \Z^{(n-m) \times (n-m)}$ unimodular such that $CU = [T \mid d]$, where $T$ is \tu and $d \in \Z^n$. Substituting \mbox{$z := U \inv y$} yields the equivalent problem
		\begin{align} \label{eq:inequality_form_ip}
			\max \{h \T U z \colon [T \mid d] z \leq g,\ z \in \Z^{n-m}\},
		\end{align}
		where we have used that $y = Uz \in \Z^{n-m} \Leftrightarrow z \in \Z^{n-m}$ as $U$ preserves integrality. Let $z^\ast$ be an optimal solution to the mixed-integer linear program
		\begin{align} \label{eq:milp}
			\max \{h \T U z \colon [T \mid d] z \leq g,\ z \in \R^{n-m},\ z_{n-m} \in \Z\},
		\end{align}
		which can be found in polynomial time~\cite[Chapter~18.4]{Schrijver_IP}. If no such solution exists,~\eqref{eq:inequality_form_ip} is infeasible. Fixing $z_{n-m} := z^\ast_{n-m}$ in~\eqref{eq:milp} induces an \lp in dimension $n-m-1$.
		Let $\bar z$ be a vertex solution to this \lp, which can be found efficiently (see, for example, \cite{grotschel2012geometric}).\footnote{As $T$ has full column rank, the feasible region is pointed, i.e., such a vertex exists.} Since $T$ is \tu, $\bar z \in \Z^{n-m-1}$. The solution $[\bar z \mid z^\ast_n]$ has the same objective value as $z^*$ and is optimal for~\eqref{eq:inequality_form_ip} since it is integral and~\eqref{eq:milp} is a relaxation of~\eqref{eq:inequality_form_ip}.\\
		
		\item The algorithm returns that $|D(C)| \geq 4$. Then, $|D(B\T)| = |D(C)| \geq 4$.\\
		
		\item The algorithm returns a duplicative relation, i.e., $\{2 \cdot k, k\} \subseteq D(C)$, $k > 0$. This case is more involved because we do not have any information as to which other elements might be contained in $D(C)$.
		
		Assume w.l.o.g. that $\ineqr h$ is feasible and that $\ineqr h$ is bounded. We postpone the unbounded case to the end of the proof. Calculate an optimal vertex solution $v$ to $\ineqr{h}$. If $v \in \Z^{n-m}$, then $v$ is also optimal for $\ineq h$. Thus, assume that $v \not\in \Z^{n-m}$ and let $I \subseteq [n]$ be the indices of tight constraints at $v$, i.e., $C_{I,\cdot} v = g_I$. In what follows, we prove that we may assume w.l.o.g. that (a) $0 \in D(C)$, (b) $k=1$, and (c) every nonzero $(n-m) \times (n-m)$ subdeterminant of $C_{I,\cdot}$ is equal to $\pm 2$.\footnote{In particular, (c) implies that $C_{I,\cdot}$ is bimodular.}\\
		
		\begin{enumerate}
			\item From Lemma~\ref{lemma:efficient_non_degeneracy_recognition} applied to $C$ for $d = 3$ we obtain three possible results: $|D(C)| \geq 4$, $0 \notin D(C)$ or $0 \in D(C)$. In the first case we are done and in the second case, $C$ is nondegenerate and $\ineq{h}$ can be solved using the algorithm in~\cite{note2016}. Therefore, w.l.o.g., $0 \in D(C)$.\\
			\item If $\{2 \cdot k,k,0\} \subseteq D(C)$ for $k > 1$, it follows from $\gcd(D(C)) = 1$ that $|D(C)|\geq 4$. Therefore, w.l.o.g., $k=1$.\\
			\item Since $v \notin \Z^{n-m}$, it holds that $1 \notin D(C_{I,\cdot})$ as otherwise, $v \in \Z^{n-m}$ due to Cramer's rule. Apply Theorem~\ref{thm:abc_recognition} once more, but this time to $C_{I,\cdot}$. If the algorithm returns that $|D(C_{I,\cdot})| \geq 4$, then $|D(C)| \geq 4$. If the algorithm returns a duplicative relation, i.e., $\{2 \cdot s,s\} \subseteq D(C_{I,\cdot})$, then $s \neq 1$ as $1 \notin D(C_{I,\cdot})$. Since by (a) and (b), $\{2,1,0\} \subseteq D(C)$, it follows that $\{2\cdot s, 2, 1, 0\} \subseteq D(C)$. Thus, $|D(C)| \geq 4$. If the algorithm calculates and returns $D(C_{I,\cdot})$, then it either finds that every nonzero $(n-m) \times (n-m)$ subdeterminant of $C_{I,\cdot}$ is equal to $\pm 2$ or it finds an element $t \in D(C_{I,\cdot}) \setminus \{2,0\}$. In the latter case, $t \neq 1$ as $1 \notin D(C_{I,\cdot})$, implying that $\{t,2,1,0\} \subseteq D(C)$ and $|D(C)| \geq 4$.\\
		\end{enumerate}
	
		Let $\ineqcone{h} := \max \{h \T y \colon C_{I,\cdot} y \leq g_I,\ y \in \Z^{n-m}\}$. As $C_{I,\cdot}$ is bimodular, this is a \bip. By possibly perturbing the vector $h$ (e.g. by adding $\frac{1}{M} \cdot \sum_{i \in I} C_{i,\cdot}$ for a sufficiently large $M > 0$), we can assume that $v$ is the unique optimal solution to $\ineqr h$, which will allow us to apply Theorem~\ref{thm:veselov_structural}. Solve $\ineqcone{h}$ using the algorithm by~\cite{artmann2017strongly}. If $\ineqcone{h}$ is infeasible, so is $\ineq{h}$. Let $y \in \Z^{n-m}$ be an optimal solution for $\ineqcone{h}$. It follows that either, $y$ is also optimal for $\ineq{h}$ or that $|D(C)| \geq 4$: If $y$ is feasible for $\ineq{h}$, it is also optimal since $\ineqcone{h}$ is a relaxation of $\ineq{h}$. If $C$ is bimodular, Theorem~\ref{thm:veselov_structural} states that $y$ is feasible for $\ineq{h}$, i.e., it is optimal for $\ineq{h}$. Thus, if $y$ is not feasible for $\ineq{h}$, $D(C)$ contains an element which is neither $0$ nor $1$ nor $2$. As $\{2,1,0\} \subseteq D(C)$ by (a) and (b), this implies that $|D(C)| \geq 4$.\\
		
		It remains to explain why we may assume that $\ineqr h$ is bounded. If not, $\ineq h$ is either infeasible or unbounded. More precisely, $\ineq h$ is unbounded if and only if $\{y \in \Z^{n-m} \colon Cy \leq g\}$ is feasible. We reduce the feasibility test of this set to a bounded \ip of the same form as above: Set $s := C_{1,\cdot}$. By construction, $\ineqr s$ is bounded. Solve $\ineq s$ using our algorithm above. Either, we determine a feasible point of $\{y \in \Z^{n-m} \colon Cy \leq g\}$ in which case $\ineq h$ is unbounded, we find that this set is infeasible, or we find that \mbox{$|D(C)| \geq 4$}.
	\end{enumerate}\qed
\end{proof}
	
	\bibliographystyle{spmpsci}
	\bibliography{bibliography}

\begin{thebibliography}{10}
\providecommand{\url}[1]{{#1}}
\providecommand{\urlprefix}{URL }
\expandafter\ifx\csname urlstyle\endcsname\relax
  \providecommand{\doi}[1]{DOI~\discretionary{}{}{}#1}\else
  \providecommand{\doi}{DOI~\discretionary{}{}{}\begingroup
  \urlstyle{rm}\Url}\fi

\bibitem{note2016}
Artmann, S., Eisenbrand, F., Glanzer, C., Oertel, T., Vempala, S., Weismantel,
  R.: A note on non-degenerate integer programs with small subdeterminants.
\newblock Operations Research Letters \textbf{44}(5), 635--639 (2016)

\bibitem{artmann2017strongly}
Artmann, S., Weismantel, R., Zenklusen, R.: A strongly polynomial algorithm for
  bimodular integer linear programming.
\newblock In: Proceedings of the 49th Annual ACM SIGACT Symposium on Theory of
  Computing, STOC 2017, pp. 1206--1219. Association for Computing Machinery,
  New York (2017)

\bibitem{Bonifasetal}
Bonifas, N., {Di Summa}, M., Eisenbrand, F., H{\"a}hnle, N., Niemeier, M.: On
  sub-determinants and the diameter of polyhedra.
\newblock Discrete \& Computational Geometry \textbf{52}(1), 102--115 (2014)

\bibitem{caracciolo2013algebraic}
Caracciolo, S., Sokal, A.D., Sportiello, A.: Algebraic/combinatorial proofs of
  cayley-type identities for derivatives of determinants and pfaffians.
\newblock Advances in Applied Mathematics \textbf{50}(4), 474--594 (2013)

\bibitem{StableSet2020}
Conforti, M., Fiorini, S., Huynh, T., Joret, G., Weltge, S.: The stable set
  problem in graphs with bounded genus and bounded odd cycle packing number.
\newblock In: Proceedings of the Thirty-First Annual ACM-SIAM Symposium on
  Discrete Algorithms, SODA '20, pp. 2896--2915. Society for Industrial and
  Applied Mathematics, USA (2020)

\bibitem{conforti2020extended}
Conforti, M., Fiorini, S., Huynh, T., Weltge, S.: Extended formulations for
  stable set polytopes of graphs without two disjoint odd cycles.
\newblock In: D.~Bienstock, G.~Zambelli (eds.) Integer Programming and
  Combinatorial Optimization, pp. 104--116. Springer International Publishing,
  Cham (2020)

\bibitem{Dyeretal}
Dyer, M., Frieze, A.: Random walks, totally unimodular matrices, and a
  randomised dual simplex algorithm.
\newblock Mathematical Programming \textbf{64}(1-3), 1--16 (1994)

\bibitem{eisenbrand_2017_geometric}
Eisenbrand, F., Vempala, S.: Geometric random edge.
\newblock Mathematical Programming \textbf{164}(1-2), 325--339 (2017)

\bibitem{abc_modular_ipco}
Glanzer, C., Stallknecht, I., Weismantel, R.: On the recognition of
  $\{a,b,c\}$-modular matrices.
\newblock In: M.~Singh, D.P. Williamson (eds.) Integer Programming and
  Combinatorial Optimization, Lecture Notes in Computer Science, pp. 238--251.
  Springer International Publishing, Cham (2021)

\bibitem{glanzer2018number}
Glanzer, C., Weismantel, R., Zenklusen, R.: On the number of distinct rows of a
  matrix with bounded subdeterminants.
\newblock SIAM Journal on Discrete Mathematics \textbf{32}(3), 1706--1720
  (2018)

\bibitem{gribanov2020parametric}
Gribanov, D.V., Malyshev, D.S., Pardalos, P.M.: A note on the parametric
  integer programming in the average case: sparsity, proximity, and
  {FPT-algorithms}.
\newblock arXiv preprint arXiv:2002.01307v3  (2020)

\bibitem{gribanov2016integer}
Gribanov, D.V., Veselov, S.I.: On integer programming with bounded
  determinants.
\newblock Optimization Letters \textbf{10}(6), 1169--1177 (2016)

\bibitem{grotschel2012geometric}
Gr{\"o}tschel, M., Lov{\'a}sz, L., Schrijver, A.: Geometric Algorithms and
  Combinatorial Optimization, {S}econd edn.
\newblock Springer, Berlin, Heidelberg (1993)

\bibitem{hupp2017integer}
Hupp, L.M.: Integer and mixed-integer reformulations of stochastic,
  resource-constrained, and quadratic matching problems.
\newblock Ph.D. thesis, Friedrich-Alexander-Universit{\"a}t
  Erlangen-N{\"u}rnberg, Erlangen (2017)

\bibitem{kortevygen}
Korte, B., Vygen, J.: Combinatorial Optimization: Theory and Algorithms,
  {F}ifth edn.
\newblock Algorithms and Combinatorics. Springer, Berlin, Heidelberg (2012)

\bibitem{nagele2019submodular}
N{\"a}gele, M., Sudakov, B., Zenklusen, R.: Submodular minimization under
  congruency constraints.
\newblock Combinatorica \textbf{39}(6), 1351--1386 (2019)

\bibitem{IntNumb2020}
Paat, J., Schl{\"o}ter, M., Weismantel, R.: The integrality number of an
  integer program.
\newblock In: D.~Bienstock, G.~Zambelli (eds.) Integer Programming and
  Combinatorial Optimization, Lecture Notes in Computer Science, pp. 338--350.
  Springer International Publishing, Cham (2020)

\bibitem{Schrijver_IP}
Schrijver, A.: Theory of Linear and Integer Programming.
\newblock John Wiley and Sons, New York (1986)

\bibitem{Seymour_Decomposition}
Seymour, P.D.: Decomposition of regular matroids.
\newblock Journal of Combinatorial Theory, Series B \textbf{28}(3), 305--359
  (1980)

\bibitem{Storjohann_PhD}
Storjohann, A.: Algorithms for matrix canonical forms.
\newblock Ph.D. thesis, ETH Z{\"u}rich, Z{\"u}rich (2000).
\newblock Diss.-Nr. 13922

\bibitem{storjohann1996asymptotically}
Storjohann, A., Labahn, G.: Asymptotically fast computation of hermite normal
  forms of integer matrices.
\newblock In: Proceedings of the 1996 International Symposium on Symbolic and
  Algebraic Computation, ISSAC '96, pp. 259--266. Association for Computing
  Machinery, New York (1996)

\bibitem{truemper1990decomposition}
Truemper, K.: A decomposition theory for matroids. v. testing of matrix total
  unimodularity.
\newblock Journal of Combinatorial Theory, Series B \textbf{49}(2), 241--281
  (1990)

\bibitem{veselov2009Intprobimmat}
Veselov, S.I., Chirkov, A.J.: Integer program with bimodular matrix.
\newblock Discrete Optimization \textbf{6}(2), 220--222 (2009)

\bibitem{veselovshevchenkodetidentity}
Veselov, S.I., Shevchenko, V.N.: Bounds for the maximal distance between the
  points of certain integer lattices (in russian).
\newblock Combinatorial-Algebraic Methods in Applied Mathematics, Izdat.
  Gor'kov. Univ., Gorki pp. 26--33 (1980)

\bibitem{walter2013implementation}
Walter, M., Truemper, K.: Implementation of a unimodularity test.
\newblock Mathematical Programming Computation \textbf{5}(1), 57--73 (2013)

\end{thebibliography}
	
\end{document}